\newtheorem{thm}{Theorem}[section]
\newtheorem{bigthm}{Theorem}
\newtheorem*{thm*}{Theorem}
\newtheorem*{conj*}{Conjecture}
\newtheorem{cor}[thm]{Corollary}
\newtheorem{lem}[thm]{Lemma}
\newtheorem{prop}[thm]{Proposition}
\newtheorem{construction}[thm]{Construction}
\newtheorem{hyp}[thm]{Hypothesis}
\theoremstyle{remark}
\newtheorem{rem}{Remark}
\newtheorem*{rem*}{Remark}
\theoremstyle{definition}
\newcounter{claim}[thm]
\newcommand{\PGammaL}{\mathrm{P}\Gamma\mathrm{L}}
\newcommand{\GammaL}{\Gamma\mathrm{L}}
\newcommand{\PGL}{\mathrm{PGL}}
\newcommand{\PSp}{\mathrm{PSp}}
\newcommand{\PO}{\mathrm{PO}}
\newcommand{\Sp}{\mathrm{Sp}}
\newcommand{\PSU}{\mathrm{PSU}}
\newcommand{\POmega}{\mathrm{P}\Omega}
\newcommand{\AGL}{\mathrm{AGL}}
\newcommand{\PSL}{\mathrm{PSL}}
\newcommand{\GL}{\mathrm{GL}}
\newcommand{\SL}{\mathrm{SL}}
\newcommand{\Cos}{\mathrm{Cos}}
\newcommand{\Aut}{\mathrm{Aut}}
\newcommand{\Out}{\mathrm{Out}}
\newcommand{\soc}{\mathrm{soc}}
\title{On edge-primitive $3$-arc-transitive graphs }
\author{Michael Giudici}
\author{Carlisle S.H. King}
\address{
Department of Mathematics and Statistics\\
The University of Western Australia\\
Crawley WA, 6009\\
Australia} 
\email{michael.giudici@uwa.edu.au}	\email{carlisle.king@hotmail.com}
\begin{document}

\maketitle
\begin{abstract}
This paper begins the classification of all edge-primitive 3-arc-transitive graphs by classifying all such graphs where the automorphism group is an almost simple group with socle an alternating or sporadic group, and all such graphs where the automorphism group is an almost simple classical group with a vertex-stabiliser acting faithfully on the set of neighbours.
\end{abstract}

Edge-primitive graphs, that is, graphs whose automorphism group acts primitively on the set of edges, were first studied by Weiss in 1973 \cite{Weiss73} who classified all the edge-primitive graphs of valency three. Many famous graphs are edge-primitive such as the Heawood graph, Tutte-Coxeter graph, Hoffman-Singleton graph and Higman-Sims graph. Another motivation for the study of edge-primitive graphs is the study of graph decompositions \cite{transdecomp}: Given a graph $\Gamma$ and a group $G$ of automorphisms, a partition $\mathcal{P}$ of the set of edges of $\Gamma$ is called a \emph{$G$-transitive decomposition} of $\Gamma$ if $\mathcal{P}$ is $G$-invariant and $G$ acts transitively on $\mathcal{P}$. If $G$ acts transitively on the edge set of $\Gamma$ then $\Gamma$ has no $G$-transitive decompositions if and only if $G$ acts primitively on the set of edges.

The study of edge-primitive graphs was reinvigorated in 2010 by the first author and Li \cite{eprim} by providing a general structure theorem of such graphs and classifying all edge-primitive graphs whose automorphism group contains $\PSL_2(q)$ as a normal subgroup.  This has led to all edge-primitive graphs of valencies 4 \cite{Guo2014b} and 5 \cite{Guo_2012} being classified, and all those of prime valency and having a soluble edge-stabiliser \cite{Pan2018}. Moreover, all edge-primitive graphs of prime power order \cite{PanHW2019} or which are Cayley graphs on abelian and dihedral groups \cite{PanWY2018} have been classified.

For $s\geq 1$, an \emph{$s$-arc} in a graph $\Gamma$ is an $(s+1)$-tuple $(v_0,v_1,\ldots,v_s)$ of vertices such that $v_i\sim v_{i+1}$ but $v_i\neq v_{i+2}$.  We say that $\Gamma$ is \emph{$s$-arc-transitive} if the automorphism group of $\Gamma$ acts transitively on the set of $s$-arcs. If a graph is 1-arc-transitive then we simply refer to it as being \emph{arc-transitive}. If all vertices of $\Gamma$ have valency at least two then an $s$-arc-transitive graph is also $(s-1)$-arc-transitive.  The study of $s$-arc-transitive graphs originated in the seminal work of Tutte \cite{Tutte47, Tutte59}, who showed that a graph of valency three is at most 5-arc-transitive. This was later extended by Weiss \cite{Weiss81} who showed that a graph of valency at least three is at most 7-arc-transitive. The vertex-primitive 4-arc-transitive graphs were classified by Li \cite{Li2001} and all edge-primitive 4-arc-transitive graphs were classified by Li and Zhang \cite{Li2011}. These classifications were enabled by the classification of all vertex-stabiliser, edge-stabiliser pairs for 4-arc-transitive graphs by Weiss \cite{Weiss81}. Moreover, the examples arising  in the edge-primitive case are the point-line incidence graphs of the Desarguesian projective planes, the generalised quadrangles associated with the symplectic groups $\PSp_4(q)$, the generalised hexagons associated with the groups $G_2(q)$ of Lie type, and five other sporadic examples.   A key part of the classification in the edge-primitive case is that the edge-stabiliser is always soluble. 
 Han, Liao and Lu \cite{Han2019} have subsequently classified all edge-primitive graphs for almost simple groups with soluble edge-stabilisers.

We say that an edge-primitive graph is \emph{nontrivial} if it is connected, arc-transitive and has valency at least 3. Let $\Gamma$ be a nontrivial edge-primitive graph and let $G=\Aut(\Gamma)$.  For a vertex $v$ of $\Gamma$, denote the stabiliser of $v$ in $G$ by $G_v$. Suppose that $\{u,v\}$ is an edge. Then the edge-stabiliser $G_{\{u,v\}}$ is maximal in $G$. Moreover, the arc-stabiliser $G_{uv}$ is an index two subgroup of $G_{\{u,v\}}$ and also contained in two other subgroups, namely the vertex-stabilisers $G_v$ and $G_w$.  Suppose further that $\Gamma$ is 2-arc-transitive.  Then  Lu \cite{Lu2018} has shown  that if $\Gamma$ is not complete bipartite then $G$ is almost simple, that is, $G$ has a unique minimal normal subgroup $T$ and $T$ is a nonabelian simple group. We refer to $T$ as the \emph{socle} of $G$ and denote it by $\soc(G)$. He further showed that if  $\Gamma$ is 3-arc-transitive then either the graph has valency 7 and $G_v=\mathrm{A}_7$ or $\mathrm{S}_7$, or $G_v$ acts unfaithfully on the set $\Gamma(v)$ of neighbours of $v$. These observations make a classification of edge-primitive 3-arc-transitive graphs feasible, whereas it would appear that we are far from a classification of all vertex-primitive 3-arc-transitive graphs.  This paper is the first in a series aiming to classify all edge-primitive 3-arc-transitive graphs. 

Let $G$ be a group with core-free subgroup $H$, that is $\cap_{x\in G}H^x = 1$,  and let $g\in G$ be such that $g^2\in H$ and $g$ does not normalise $H$. Then we can construct the coset graph $\Gamma=\Cos(G,H,HgH)$ whose vertices are the right cosets of $H$ in $G$ and $Hx\sim Hy$ if and only if $xy^{-1}\in HgH$. Then $G$ acts faithfully as an arc-transitive group of automorphisms of $\Gamma$. Indeed all arc-transitive graphs arise in this manner by taking $G$ to be a group of automorphisms that acts transitively on the set of arcs, $H$ to be the stabiliser in $G$ of a vertex $v$ and $g\in G$ to be an element interchanging the two vertices of an edge $\{v,u\}$.  See for example \cite{Sabidussi}. We can use the coset graph construction to determine precisely when for a group $G$ with maximal subgroup $E$, there is an edge-primitive graph with group of automorphisms $G$ and edge-stabiliser $E$, see Lemma  \ref{prop:cosetgraph}.

Our first result deals with almost simple groups whose socle is either an alternating or a sporadic simple group. Our group theory notation in Table \ref{tab:Anspor} follows Atlas \cite{Atlas} notation, which will be defined in Section \ref{sec:prelim}.

\begin{bigthm}\label{thm:Anspor}
Let $\Gamma$ be a nontrivial edge-primitive 3-arc-transitive graph with $G=\Aut(\Gamma)$ such that $G$ is an almost simple group whose socle is either an alternating group or a sporadic simple group. Then there is a quadruple $(G,E,A,H)$ as listed in Table \ref{tab:Anspor} such that $E$ is a maximal subgroup of $G$, $A=H\cap E$ and $\Gamma=\Cos(G,H,HgH)$ for some $g\in H\backslash A$.
\end{bigthm}

\begin{table}[!h]
\caption{Edge-primitive 3-arc-transitive graphs with $\Gamma$ and $G$ as in Theorem \ref{thm:Anspor}.}
\label{tab:Anspor}
\centering
\begin{tabular}{c c c c c c c} 
$G$ & $E$ & $A$ & $H$  & Notes \\ \hline \\[-2.5ex]
$\Aut(\mathrm{A}_6)$ & $[2^5]$ & $[2^4]$ & $\mathrm{S}_4 \times \mathrm{S}_2$  & Tutte's 8-cage \cite{Tutte47}, bipartite \\
$\mathrm{M}_{12}.2$ &  $3^{1+2}_+:\mathrm{D}_8$ & $3^{1+2}_+:2^2$ & $3^2:2\mathrm{S}_4$  & Weiss \cite{Weiss85}, bipartite \\
$\mathrm{J}_3.2$ &  $[2^6]:(\mathrm{S}_3)^2$ & $[2^6]:\left( (\mathrm{S}_3)^2 \cap \mathrm{A}_6 \right)$& $[2^4]:(3 \times \mathrm{A}_5).2$  & Weiss \cite{Weiss86}\\
$\mathrm{Ru}$ & $5^{1+2}_+:[2^5]$ & $5^{1+2}_+:[2^4]$ &  $5^2:\GL_2(5)$  & Ru graph \cite{StrWe} \\
$\mathrm{O'N}.2$ & $\PGL_2(9)$ & $\mathrm{A}_6$ & $\mathrm{A}_7$  &  Lu \cite{Lu2018}, bipartite  \\
 \hline
\end{tabular}
\end{table}

Our second result looks at classical groups where the stabiliser of a vertex $v$ acts faithfully on the set of neighbours of $v$. Classical groups with an unfaithful vertex-stabiliser will be dealt with in a subsequent paper. 

\begin{bigthm}\label{thm:faithful}
Let $\Gamma$ be a nontrivial edge-primitive 3-arc-transitive graph  with $G=\Aut(\Gamma)$ such that $G$ is an almost simple classical group and  for a vertex $v$, the vertex-stabiliser $G_v$ acts faithfully on the set of neighbours of $v$. Then there is a quadruple $(G,E,A,H)$ as listed in Table \ref{tab:classicalgroupsfaithful} such that $E$ is a maximal subgroup of $G$, $A=H\cap E$ and $\Gamma=\Cos(G,H,HgH)$ for some $g\in H\backslash A$.
\end{bigthm}

\begin{rem}
The notation for the classical groups is defined in Section \ref{sec:classicalgroupsfaithful}, and for the outer automorphisms $\delta, \delta', \phi$ and $\gamma$ we follow the notation of \cite{BHRD}.  Moreover, $(2^2)_{122}$ is the outer automorphism group of $\PSU_4(3)$ isomorphic to $C_2^2$ given in \cite{Atlas}. The graph obtained from the first row of Table \ref{tab:classicalgroupsfaithful} is the Hoffman-Singleton graph \cite{HoSi} and is the only vertex-primitive graph obtained.  We believe that all of the remaining examples are new.  
\end{rem}

\begin{rem}
Table \ref{tab:classicalgroupsfaithful} also lists a subgroup $S$ of $G$ for each graph. If $G$ acts primitively on vertices then $S=G$ while if $\Gamma$ is bipartite and the stabiliser $G^+$ in $G$ of each bipartite half acts primitively on each bipartite half then $S=G^+$. In all other cases, $S$ is a maximal subgroup of $G$ or $G^+$ that contains $A$.
\end{rem}

\begin{table}[!h]
\begin{adjustwidth}{-1.2in}{-1.2in} 
\caption{Edge-primitive 3-arc-transitive graphs with $\Gamma$ and $G$ as in Theorem \ref{thm:faithful} and $T=\soc(G)$, where $\delta', \delta, \phi$ and $\gamma$ follow the notation of \cite{BHRD}}
\label{tab:classicalgroupsfaithful}
\centering
\begin{tabular}{c c c c c c c c} 
$G$ & $E$ & $A$ & $H$ & $S$ & Conditions \\ \hline \\[-2.5ex]
$\PSU_3(5).\langle \gamma \rangle$ & $\Aut(\mathrm{A_6})$ & $\mathrm{S_6}$ & $\mathrm{S_7}$ & $G$ & \\
$\PSL_6(q).\langle \gamma, \delta^3 \rangle$ & $\Aut(\mathrm{A}_6)$ & $\mathrm{S_6}$ & $\mathrm{S_7}$ & $\PSU_4(3).(2^2)_{122}$ & $q=p \equiv 7, 13\pmod{24}$    \\
$\PSL_6(q).\langle \gamma \rangle$ & $\PGL_2(9)$ & $\mathrm{A_6}$ & $\mathrm{A_7}$ & $T$ & $q=p \equiv 1, 31 \pmod{48}$   \\
$\PSL_6(q).\langle \gamma\delta \rangle$ & $\PGL_2(9)$ & $\mathrm{A_6}$ & $\mathrm{A_7}$ & $T$ & $q=p \equiv 7, 25 \pmod{48}$  \\
$\PSL_6(q).\langle \phi, \gamma \delta^3 \rangle$ & $\Aut(\mathrm{A_6})$ & $\mathrm{S_6}$ & $\mathrm{S_7}$ & $ T. \langle \phi \rangle$ & $q=p^2,$   $p \equiv 5, 11 \pmod{24}$  \\ 
$\PSL_6(q).\langle \phi \gamma, \gamma \delta^3 \rangle$ & $\Aut(\mathrm{A_6})$ & $\mathrm{S_6}$ & $\mathrm{S_7}$ & $T. \langle \phi \gamma \rangle$ & $q=p^2,$  $p \equiv 13, 19 \pmod{24}$  \\ 
$\PSU_6(q).\langle \gamma, \delta^3 \rangle$ & $\Aut(\mathrm{A}_6)$ & $\mathrm{S_6}$ & $\mathrm{S_7}$ & $\PSU_4(3).(2^2)_{122}$ & $q=p \equiv 11,17 \pmod{24}$    \\
$\PSU_6(q).\langle \gamma \rangle$ & $\PGL_2(9)$ & $\mathrm{A_6}$ & $\mathrm{A_7}$ & $T$ & $q=p \equiv 17, 47 \pmod{48}$  \\ 
$\PSU_6(q).\langle \gamma \delta \rangle$ & $\PGL_2(9)$ & $\mathrm{A_6}$ & $\mathrm{A_7}$ & $T$ &  $q=p \equiv 23, 41 \pmod{48}$   \\ 
$\POmega_{10}^-(7).\langle \gamma, \delta' \rangle$ & $\Aut(\mathrm{A_6})$ & $\mathrm{S}_6$ & $\mathrm{S}_7$ & $\mathrm{M}_{22}.2, \PSp_4(7).2$ \\
 \hline
\end{tabular}
\end{adjustwidth}
\end{table}

 We briefly outline the structure of the paper. First we list some preliminary results on edge-primitive graphs and outline our method for proving Theorems \ref{thm:Anspor} and \ref{thm:faithful}. The method involves considering each family of finite almost simple group $G$ of alternating, sporadic or classical type and each type of maximal subgroup $E \leqslant G$. In Sections 2 and 3 we consider the case where $G$ is alternating and sporadic respectively. We consider each type of maximal subgroup of $G$ using the O'Nan-Scott theorem if $G$ is alternating and the Atlas \cite{Atlas} if $G$ is sporadic. We conclude Section 3 with the proof of Theorem \ref{thm:Anspor}. In Section 4 we consider the case where $G$ is a finite almost simple classical group: we make the additional assumption that the local action is faithful, and end with the proof of Theorem \ref{thm:faithful}.

 We note that our analysis unveils some new edge-primitive 2-arc-transitive graphs: one with $G=\mathrm{S}_8$ (Construction \ref{con:gamma1}) and one with $G=\mathrm{J}_1$ (see case (8) in the proof of Proposition \ref{prop:sporadicgroups}).

\subsection*{Acknowledgements} This research was supported by ARC Discovery Project DP150101066.

\section{Preliminaries} \label{sec:prelim}
For a group $G$ with subgroup $M$ we write $M \max G$ if $M$ is a maximal subgroup. Denote by $\frac{1}{2}G$ any subgroup of $G$ of index 2 if such a subgroup exists, and let $\frac{1}{2}G=G$ otherwise. For the subgroup structures in Table \ref{tab:Anspor} we follow the notation of \cite{Atlas}; in particular, for an integer $n$ and prime $r$ denote by $[n]$ an arbitrary group and $n$ a cyclic group of order $n$, and let $r^n$ denote an elementary abelian group of the same order. Moreover, we let $r_\epsilon^{1+2n}$ denote an extraspecial $r$-group of order $r^{1+2n}$ of type $\epsilon= \pm$. Furthermore, given groups $A$ and $B$ we use $A:B$ to denote a semidirect product of $A$ and $B$, where $A$ is normal. Finally, if a group $G$ acts on a set $\Omega$, then the permutation group induced by $G$ on $\Omega$ is denoted by $G^\Omega$.

Let $\Gamma=( V(\Gamma), E(\Gamma) )$ be a graph with $e=\{ u, v \} \in E(\Gamma)$ and let $G \leqslant \operatorname{Aut}(\Gamma)$.
 If $G$ acts primitively on the set of vertices or edges of $\Gamma$, we say $\Gamma$ is $G$-{\it vertex-primitive} or $G$-{\it edge-primitive} respectively. We call $\Gamma$ {\it vertex-primitive} if $\Gamma$ is $\Aut(\Gamma)$-vertex-primitive, and similarly if $\Gamma$ is $\Aut(\Gamma)$-edge-primitive. If $\Gamma$ is bipartite and $G$ acts transitively on the set of vertices of $\Gamma$ then $G$ has a normal subgroup $G^+$ of index 2 fixing each bipartite half setwise. We call a transitive group {\it biprimitive} if it is imprimitive and if all nontrivial systems of imprimitivity have precisely two parts. We say that $\Gamma$ is $G$-{\it vertex-biprimitive} if $G$ acts biprimitively on the set of vertices of $\Gamma$; furthermore, we say $\Gamma$ is {\it vertex-biprimitive} if $\Gamma$ is $\Aut(\Gamma)$-vertex-biprimitive. For an integer $s$, we say $\Gamma$ is $(G,s)$-{\it arc-transitive} if $G$ acts transitively on the set of $s$-arcs of $\Gamma$, and say $\Gamma$ is $s$-{\it arc-transitive} if $\Gamma$ is $(\Aut(\Gamma), s)$-arc-transitive.
 
  Let $\Gamma$ be a graph with $G\leqslant \Aut(\Gamma)$ acting transitively on the set of arcs of $\Gamma$. Then $G$ acts transitively on the set of 2-arcs of $\Gamma$ if and only if $G_v^{\Gamma(v)}$, the permutation group induced by $G_v$ on $\Gamma(v)$, is 2-transitive (see \cite[Lemma~9.4]{PLN}). This gives an easy test to see if an edge-primitive graph is 2-arc-transitive.  

 By \cite[Lemma~4.1]{eprim}, if $\Gamma$ is a disconnected edge-primitive graph then either $\Gamma$ is the union of isolated vertices and single edges or $\Gamma$ is the union of isolated vertices and a connected edge-primitive graph, and so to study edge-primitive graphs it suffices to consider connected edge-primitive graphs. By \cite[Lemma~3.4]{eprim}, a connected edge-primitive graph $\Gamma$ is either a star, a cycle, or $\Gamma$ is arc-transitive. 

 We say that a graph $\Gamma$ is a \emph{spread} of a graph $\Gamma_0$ if there is a partition $\mathcal{P}$ of $V(\Gamma)$ such that 
 \begin{enumerate}[(a)]
 \item $\Gamma_0$ is isomorphic to the quotient graph $\Gamma_{\mathcal{P}}$ with vertex set $\mathcal{P}$ and two parts $P_1,P_2\in\mathcal{P}$ are adjacent if there exist $v\in P_1$ and $w\in P_2$ such that $v$ and $w$ are adjacent in $\Gamma$; and
 \item if $P_1$ and $P_2$ are adjacent in $\Gamma_{\mathcal{P}}$ then there is a unique $v\in P_1$ and $w\in P_2$ such that $v$ and $w$ are adjacent in $\Gamma$.
 \end{enumerate}
 Note that there may be more than one $\Gamma_0$.  By \cite[Theorem~1]{eprim}, a nontrivial edge-primitive graph $\Gamma$ is either vertex-primitive, vertex-biprimitive or a spread of an edge-primitive graph $\Gamma_0$ that is vertex-primitive or vertex-biprimitive. If $\Gamma$ is a spread of $\Gamma_0$ and $\Aut(\Gamma)$ acts faithfully on the partition $\mathcal{P}$, then $\Aut(\Gamma) \leqslant \Aut(\Gamma_0)$ and the stabiliser in $\Aut(\Gamma)$ of a vertex of $\Gamma_0$ is the stabiliser of a part of the partition $\mathcal{P}$. For those graphs in Table \ref{tab:classicalgroupsfaithful} that are neither vertex-primitive nor vertex-biprimitive, the subgroup $S$ listed is such a stabiliser.

We now state the following characterisation of edge-primitive graphs as coset graphs.
\begin{lem}\cite[Proposition~2.5]{eprim} \label{prop:cosetgraph}
Let $G$ be a group with a maximal subgroup $E$. Then there exists a $G$-edge-primitive, $G$-arc-transitive graph $\Gamma$ with edge-stabiliser $E$ if and only if $E$ has a subgroup $A$ of index two, and $G$ has a core-free subgroup $H$ such that $A<H\neq E$; in this case $\Gamma\cong \Cos(G,H,HgH)$ for some $g\in E\backslash A$. 
\end{lem}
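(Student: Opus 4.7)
The plan is to prove both directions of the biconditional by going through the standard dictionary between arc-transitive $G$-graphs and double coset data.

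For the forward direction, assume $\Gamma$ is a $G$-edge-primitive, $G$-arc-transitive graph with edge-stabiliser $E = G_{\{u,v\}}$. Set $A = G_{uv}$ and $H = G_v$. Since $G$ is arc-transitive but not vertex-fixing on the edge, $E$ induces the transposition on $\{u,v\}$, giving $|E:A| = 2$. Because $G$ acts faithfully on vertices of the connected arc-transitive graph (edge-primitivity plus the Weiss--Giudici--Li observation that we may assume $\Gamma$ is connected), $H$ is core-free in $G$. The containments $A < H$ and $A < E$ are immediate since $A$ fixes both endpoints, while $H$ fixes only $v$ and $E$ also swaps the endpoints; in particular $H \neq E$. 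Now pick any $g \in E \setminus A$: then $g$ swaps $u$ and $v$, so $g^2 \in A \leqslant H$, and $H^g = G_v^g = G_u \neq H$, so $g$ does not normalise $H$. Standard coset-graph theory (see \cite[Proposition~2.5]{eprim}) then identifies $\Gamma$ with $\Cos(G,H,HgH)$.

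For the reverse direction, suppose $E \max G$, $A$ is an index-two subgroup of $E$, $H$ is core-free with $A < H \neq E$. First observe $H \not\leqslant E$: for if $H \leqslant E$ then the chain $A < H \leqslant E$ combined with $|E:A|=2$ would force $H = E$, a contradiction. Maximality of $E$ therefore gives $\langle H, E\rangle = G$. Next pick any $g \in E \setminus A$, so $E = \langle A, g\rangle$ and $g^2 \in A \leqslant H$. I claim $g \notin N_G(H)$: otherwise $E = \langle A, g\rangle \leqslant N_G(H)$, and together with $H \leqslant N_G(H)$ this yields $N_G(H) \geqslant \langle H, E\rangle = G$, forcing $H \triangleleft G$; since $H$ is core-free this gives $H = 1$, and then $A=1$ with $|E|=2$, a trivial degenerate case which is easily handled separately.

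Now form $\Gamma = \Cos(G,H,HgH)$, a connected $G$-arc-transitive graph since $\langle H, g \rangle \supseteq \langle H, A, g \rangle = \langle H, E \rangle = G$. The arc-stabiliser of the arc $(H, Hg)$ is $H \cap H^g$, which contains $A$: indeed $A \triangleleft E$ (as $|E:A|=2$) gives $gAg^{-1} = A \leqslant H$, so $A \leqslant H^g$. Because right multiplication by $g$ swaps the adjacent cosets $H$ and $Hg$ (using $g^2 \in H$), the edge-stabiliser of $\{H, Hg\}$ contains $\langle H \cap H^g, g\rangle \supseteq \langle A, g\rangle = E$. By maximality of $E$ the edge-stabiliser is either $E$ or $G$, and the latter would force at most two vertices, contradicting $H \neq E$ together with $H \not\leqslant E$. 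Hence the edge-stabiliser equals $E$, and edge-primitivity of $G$ on $\Gamma$ follows from $E \max G$.

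The main obstacle is the middle step: showing $g$ does not normalise $H$ (so that the double coset $HgH$ genuinely yields a graph, not a trivial coincidence) and ruling out the possibility that the edge-stabiliser inflates to all of $G$. Both require careful use of $E$'s maximality together with the core-freeness of $H$, and handling the degenerate case $H = 1$ (the single-edge graph) separately.
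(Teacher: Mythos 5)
Your proof is essentially correct and follows the standard double-coset dictionary between arc-transitive $G$-graphs and subgroup data; note that the paper does not prove this lemma itself but imports it from \cite{eprim}, and your argument is in substance the one given there. One justification is misplaced: when you rule out the possibility that the edge-stabiliser of $\{H,Hg\}$ equals $G$, the contradiction does not follow from ``$H\neq E$ together with $H\not\leqslant E$'' (for instance $G=\mathrm{S}_4$, $H=\mathrm{A}_4$, $E=\mathrm{S}_3$ satisfies both conditions while $|G:H|=2$); the correct route is that a $G$-edge-transitive graph with edge-stabiliser $G$ has a single edge, so $|G:H|\leqslant 2$, whence $H\trianglelefteq G$ and $H=1$ by core-freeness, contradicting $A<H$. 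The same observation shows that the ``degenerate case $H=1$'' you set aside in the normaliser step cannot occur at all, since $A<H$ already forces $H\neq 1$, so no separate handling is needed.
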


The following result reduces the study of edge-primitive graphs with $G, E$ and $H$ as in Lemma \ref{prop:cosetgraph} to $\Aut(G)$-conjugacy classes of $E$ and $H$. 
\begin{lem} \label{lem:isomorphicgraphs}
Let $G, E, A$ and $H$ be as in Lemma \ref{prop:cosetgraph}, and let $g, g_1 \in E \backslash A$ and $\phi \in \Aut(G)$. If $\Gamma = \Cos(G,H,HgH)$ and $\Gamma_1 = \Cos(G,H, Hg_1H)$ then $\Gamma = \Gamma_1$. Moreover, if $E_1 = E^\phi, A_1 = A^\phi, g_2 \in E_1 \backslash A_1, H_1 = H^\phi$ and $\Gamma_2=\Cos(G,H_1,H_1g_2H_1)$ then $\Gamma \cong \Gamma_2$. 
\end{lem}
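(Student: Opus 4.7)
The plan is to address the two assertions in turn, both of which reduce to elementary coset manipulations; the key observation driving everything is that $A$ sits inside $H$ (by Lemma~\ref{prop:cosetgraph}) and has index $2$ in $E$, so $E \setminus A$ is a single coset of $A$ in $E$.

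For the first assertion, I would fix $g, g_1 \in E \setminus A$ and use that $|E:A| = 2$ to write $g_1 = g a$ for some $a \in A$. Since $A \leqslant H$ by the hypothesis of Lemma~\ref{prop:cosetgraph}, we have $a \in H$, and therefore
\[
H g_1 H \;=\; H g a H \;=\; H g H .
\]
Since the coset graph $\Cos(G,H,HgH)$ is defined purely in terms of the double coset $HgH$, this immediately gives $\Gamma = \Gamma_1$ (literally the same graph, not just isomorphic).

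For the second assertion, I would exhibit an explicit isomorphism $\Gamma \to \Gamma_2$. The natural candidate is the map on vertex sets
\[
\psi \colon Hx \;\longmapsto\; H_1 x^\phi ,
\]
which is well defined and bijective because $\phi \in \Aut(G)$ sends the right cosets of $H$ bijectively to those of $H^\phi = H_1$. To check that $\psi$ preserves adjacency, note that $Hx \sim Hy$ in $\Gamma$ iff $xy^{-1} \in HgH$, and applying $\phi$ gives
\[
(xy^{-1})^\phi \;\in\; (HgH)^\phi \;=\; H^\phi g^\phi H^\phi \;=\; H_1\, g^\phi\, H_1 .
\]
Since $g^\phi \in E^\phi \setminus A^\phi = E_1 \setminus A_1$ and $g_2 \in E_1 \setminus A_1$, the first part of the lemma (applied to $G, E_1, A_1, H_1$) yields $H_1 g^\phi H_1 = H_1 g_2 H_1$. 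Hence $\psi$ pulls the adjacency relation of $\Gamma_2$ back to that of $\Gamma$, giving $\Gamma \cong \Gamma_2$.

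There is really no obstacle here; both parts are bookkeeping, and the only subtle point is remembering that $A \leqslant H$ (which is precisely what makes the double coset insensitive to the choice of representative in $E \setminus A$). This is presumably why the authors omit the proof.
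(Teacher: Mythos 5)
Your proof is correct and is exactly the ``straightforward'' argument the authors had in mind when omitting the proof: the first claim follows because $E\setminus A$ is a single coset of $A\leqslant H$, so the double coset $HgH$ does not depend on the choice of $g$, and the second follows by transporting everything by $\phi$ and then invoking the first claim for the quadruple $(G,E_1,A_1,H_1)$. No gaps.
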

\begin{proof}
Since $|E:A|=2$ we have that $g_1=ag$ for some $a\in A\leqslant H$. Hence $Hg_1H=HagH=HgH$ and so $\Gamma=\Gamma_1$. Moreover, the automorphism $\phi$ of $G$ gives a bijection from the set of right cosets of $H$ in $G$ to the set of right cosets of $H_1$ in $G$ and maps $HgH$ to $H_1g_2H_1$. Thus $\phi$ provides an isomorphism from $\Gamma$ to $\Gamma_2$.
\end{proof}

 For an almost simple group $G$ acting edge-primitively on a graph $\Gamma$, the next lemma allows us to consider the action of the subgroups $\soc(G) \leqslant G_1 \leqslant G$ on edges. 

\begin{lem} \label{lem:cosetgraphreduction}
Let $G$ be an almost simple group and let $\Gamma$ be a $G$-edge primitive, $G$-arc-transitive graph with edge-stabiliser $G_e$. If $\soc(G) \leqslant G_1 \leqslant G$ such that $\left( G_e \cap G_1 \right) \max G_1$, then $G_1$ is edge-primitive and arc-transitive.
\end{lem}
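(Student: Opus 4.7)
The plan is to follow the approach of \cite[Lemma~8.3]{eprim}, which treats the special case $G_1 = \soc(G) = \PSL_2(q)$. The strategy is to first establish edge-transitivity of $G_1$, then upgrade to edge-primitivity using the hypothesis, and finally invoke a structural result on edge-primitive graphs to deduce arc-transitivity.

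For the first step, let $T = \soc(G)$. The key fact is that $G$ acts faithfully on $E(\Gamma)$: since $G$ is almost simple, $\Gamma$ cannot be a cycle (whose automorphism group is dihedral and thus solvable), so combined with $G$-arc-transitivity it has valency at least three, and then a standard argument shows that any automorphism fixing every edge setwise must fix every vertex (at a vertex $y$, two distinct edges through $y$ meet only at $y$, forcing $y$ to be fixed, and connectedness propagates this). Hence $G_e$ is core-free and $T \not\leqslant G_e$, so maximality of $G_e$ forces $T G_e = G$. Since $T \leqslant G_1$, this gives $G_1 G_e = G$, and therefore $G_1$ acts transitively on edges with stabiliser $G_1 \cap G_e$.

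Combined with the hypothesis that $G_1 \cap G_e$ is maximal in $G_1$, this immediately shows $G_1$ acts edge-primitively on $\Gamma$. For arc-transitivity I would invoke \cite[Lemma~3.4]{KlLi}: a connected graph on which a group acts edge-primitively is either a star, a cycle, or is arc-transitive under that group. Since $\Gamma$ is $G$-arc-transitive with $G$ almost simple, $\Gamma$ is neither a star (stars are never arc-transitive) nor a cycle (ruled out above). Therefore $G_1$ must act arc-transitively on $\Gamma$, completing the argument.

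I do not foresee a serious obstacle; the only delicate point is that \cite[Lemma~3.4]{KlLi} is stated for edge-primitive graphs under the full automorphism group, whereas here I am applying it with a proper subgroup $G_1 \leqslant \Aut(\Gamma)$ acting edge-primitively. However, the proof there only uses primitivity of whichever group is acting on edges, so the transfer is immediate. If one wished to avoid citing \cite[Lemma~3.4]{KlLi}, one could argue directly: if $G_1$ were edge-transitive but not arc-transitive, then $(G_1)_e \leqslant G_u \cap G_v$ for $e=\{u,v\}$, and maximality of $(G_1)_e$ in $G_1$ together with $G_1$-edge-transitivity would force $G_1 \cap G_u = G_1 \cap G_v = (G_1)_e$, which one can then rule out by exhibiting an element of $G_1$ that swaps $u$ and $v$ inside $G_e$.
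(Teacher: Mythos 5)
Your argument is correct and is exactly the intended one: the paper omits the proof, referring to \cite[Lemma~8.3]{eprim}, and that proof proceeds just as you do --- faithfulness of $G$ on $E(\Gamma)$ gives $T \not\leqslant G_e$, hence $G_1G_e \supseteq TG_e = G$ and edge-transitivity of $G_1$, then the hypothesis gives edge-primitivity and the star/cycle/arc-transitive trichotomy gives arc-transitivity. Your side remark is also right that the trichotomy is stated in \cite{eprim} relative to an arbitrary subgroup of $\Aut(\Gamma)$, so applying it to $G_1$ needs no extra justification.
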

\begin{proof}
The special case $G_1 = \soc(G)=\PSL_2(q)$ is proved in \cite[Lemma~8.3]{eprim}, and the proof in the general case is nearly identical.
\end{proof}

 We now begin to consider the types of maximal subgroups $E$ of $G$ that yield edge-primitive graphs $\Gamma$ as in Lemma \ref{prop:cosetgraph}. The following results allow us to eliminate certain families of maximal subgroups. The first is essentially proved in \cite[Lemma~2.14]{GLX1}, and we omit the proof.

\begin{lem} \label{lem:Gvnormaliser}
Let $\Gamma$ be a connected $G$-arc-transitive graph, and let $\{ u, v \} \in E(\Gamma)$ with $g \in G$ such that $v^g=u$. Then any nontrivial normal subgroup of $G_v$ is not normalised by~$g$. 
\end{lem}

\begin{lem} \label{lem:cycliccentralizer}
Let $\Gamma$ be a nontrivial  $G$-edge-primitive graph and $e=\{u,v\} \in E(\Gamma)$. Assume $G_v$ acts unfaithfully on $\Gamma(v)$, and let $N \trianglelefteq G_e$ be a nonabelian normal subgroup such that $G_e/N$ is soluble and $N$ is a minimal normal subgroup of $G_{uv}$. Then $C_{G_e}(N)$ is noncyclic.
\end{lem}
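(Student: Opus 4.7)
The plan is to produce two distinct conjugate nontrivial subgroups of $C_{G_e}(N)$, which cannot both lie inside any cyclic group (since a cyclic group has a unique subgroup of each order). Set $K := G_v^{[1]}$, nontrivial by the unfaithful-local-action hypothesis and normal in $G_v$ hence in $G_{uv}$, and let $g \in G_e \setminus G_{uv}$ be an element swapping $u$ and $v$; then $K^g = G_u^{[1]}$ is also nontrivial and normal in $G_{uv}$. Both $K$ and $K^g$ are the natural candidates.

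A preliminary step is to show $K \ne K^g$. Suppose $K = K^g$. Since $K \trianglelefteq G_v$ and $G_v$ is transitive on $\Gamma(v)$, for each $w \in \Gamma(v)$ and any $\sigma \in G_v$ with $\sigma(u) = w$ we get $G_w^{[1]} = (G_u^{[1]})^\sigma = K^\sigma = K$, so $K$ fixes pointwise every vertex at distance at most $2$ from $v$. Induction on distance along the connected graph $\Gamma$ then forces $K$ to fix every vertex, contradicting faithfulness of $G \leqslant \Aut(\Gamma)$. Next, since $N \cap K$ is $G_{uv}$-normal and contained in $N$, minimality of $N$ in $G_{uv}$ gives either $N \cap K = 1$ or $N \leqslant K$. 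In the first case $[N, K] \leqslant N \cap K = 1$ gives $K \leqslant C_{G_{uv}}(N) \leqslant C_{G_e}(N)$; conjugating by $g$ and using $N^g = N$ yields $K^g \leqslant C_{G_e}(N)$. A cyclic $C_{G_e}(N)$ would then force $K = K^g$ (they are conjugate, hence of equal order), contradicting the preliminary step.

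The main obstacle is the remaining case $N \leqslant K$, which forces $N \leqslant K \cap K^g = G_{uv}^{[1]}$. My plan is to rule this subcase out. The hypotheses $G_e/N$ soluble and $N$ minimal normal nonabelian identify $N$ with the soluble residual $G_{uv}^{\infty}$, which is characteristic in $G_{uv}$. Combined with $N \leqslant G_u^{[1]}$ and the observation that each $\sigma \in G_v$ transports the arc $(u,v)$ to $(\sigma(u), v)$ so that $N^\sigma \leqslant G_{\sigma(u)}^{[1]}$, I would apply the connectedness iteration of the preliminary step to a suitable $G_v$-invariant subgroup built from these conjugates (for instance a core of $N$ lying inside the joint pointwise stabiliser of $\{v\} \cup \Gamma(v)$), forcing a nontrivial normal subgroup of $G$ to act trivially on $V(\Gamma)$, contradicting faithfulness. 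The delicate point of the proof is precisely choosing this auxiliary subgroup and verifying the triviality in a way that does not require local primitivity (which is not given by edge-primitivity alone and so precludes an immediate Thompson--Wielandt appeal).
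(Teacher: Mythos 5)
Your preliminary step and your treatment of the case $N\cap K=1$ are both correct, and in that case your ``two distinct conjugate subgroups of a cyclic group'' contradiction is a perfectly good (and slightly more self-contained) variant of the paper's argument. The genuine gap is in the remaining case $N\leqslant K$. There you reduce to $N\leqslant K\cap K^g=G_{uv}^{[1]}$ and propose to conclude by a ``connectedness iteration'' applied to a $G_v$-invariant subgroup built from the conjugates $N^\sigma$, $\sigma\in G_v$. Neither natural candidate works: the join $\langle N^\sigma:\sigma\in G_v\rangle$ is normal in $G_v$ and lies in $K$, but there is no reason it should lie in $G_w^{[1]}$ for every $w\in\Gamma(v)$ (only $N^\sigma$ fixes $\Gamma(\sigma(u))$ pointwise, not the other conjugates), so the iteration cannot start; while the core $\bigcap_{\sigma\in G_v}N^\sigma$ does lie in the pointwise stabiliser of the ball of radius $2$ about $v$, but it may well be trivial, and even if nontrivial this is not yet a contradiction --- pointwise stabilisers of balls of bounded radius are routinely nontrivial (this is exactly the territory of Thompson--Wielandt, as you note), and propagating ``fixes everything'' to all of $V(\Gamma)$ requires a subgroup normalised by $\langle G_v, g\rangle=G$, which you have not produced.

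The missing idea is that in this case $N$ itself is already normal in $G_v$. You have $N=G_{uv}^{(\infty)}$; since $K\leqslant G_{uv}$ this gives $K^{(\infty)}\leqslant N$, and since $N$ is perfect and $N\leqslant K$ also $N\leqslant K^{(\infty)}$, whence $N=K^{(\infty)}\operatorname{char}K\triangleleft G_v$, so $N\triangleleft G_v$. As $N\triangleleft G_e$ by hypothesis, $N$ is a nontrivial normal subgroup of $G_v$ normalised by the arc-reversing element $g$, hence normal in $\langle G_v,g\rangle=G$ and contained in a point stabiliser, contradicting faithfulness and vertex-transitivity (this is precisely Lemma~\ref{lem:Gvnormalizer}, whose proof is your own preliminary-step argument run on $N$ rather than on $K$). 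Note that your observation that $N$ is characteristic in $G_{uv}$ does not suffice, since $G_{uv}$ is not normal in $G_v$; it is the identification $N=K^{(\infty)}$ that transports normality up to $G_v$.
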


\begin{proof}
Suppose for a contradiction that $C_{G_e}(N)$ is cyclic. Let $K \trianglelefteq G_{v}$ be the kernel of the action of $G_v$ on $\Gamma(v)$, so that $1\neq K \trianglelefteq G_{uv} \leqslant G_e$. As $N$ is a direct product of nonabelian simple groups, $G_e/N$ is soluble and $|G_e:G_{uv}|=2$, we have $N=G_e^{(\infty)}=G_{uv}^{(\infty)}$, where for a group $X$ we denote the last term in the derived series by $X^{(\infty)}$. As $K \cap N \leqslant N$ and $K \cap N \trianglelefteq G_{uv}$, either $N \leqslant K$ or $K \cap N = 1$. In the first case, this implies $G_e^{(\infty)} \leqslant K \leqslant G_e$, and so $K^{(\infty)}=G_e^{(\infty)} \trianglelefteq G_e$ and $K^{(\infty)} \operatorname{char} K \trianglelefteq G_v$. Thus $K^{(\infty})=N$ is a nontrivial normal subgroup of $G_v$ that is also normal in $G_e$, contradicting Lemma \ref{lem:Gvnormaliser}. In the second case this implies $K \leqslant C_{G_e}(N)$, and since $C_{G_e}(N)$ is cyclic we have $K \operatorname{char} C_{G_e}(N) \trianglelefteq G_e$. Thus $K$ is a nontrivial normal subgroup of $G_v$ that is also normal in $G_e$, again contradicting Lemma \ref{lem:Gvnormaliser}.
\end{proof}

 Using Lemma \ref{lem:cycliccentralizer}, we obtain the following useful result.
\begin{lem} \label{lem:GeGvalmostsimple}
Let $\Gamma$ be a nontrivial connected $G$-edge-primitive graph and $e=\{u,v\} \in E(\Gamma)$. If $G_v$ acts unfaithfully on $\Gamma(v)$, then neither $G_v$ nor $G_e$ is almost simple.
\end{lem}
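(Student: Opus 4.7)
The plan is to argue by contradiction using Lemma \ref{lem:cycliccentralizer}, splitting on whether $G_e$ or $G_v$ is almost simple. In both cases I would locate a nonabelian normal subgroup $N\triangleleft G_e$ which is minimal normal in $G_{uv}$ and has $G_e/N$ soluble, and then reach a contradiction by showing $C_{G_e}(N)$ has order at most $2$.

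Suppose first that $G_e$ is almost simple with socle $N$. As $[G_e:G_{uv}]=2$ and the nonabelian simple group $N$ has no subgroup of index $2$, we have $N\leq G_{uv}$. Any normal subgroup of $G_{uv}$ contained in $N$ is then normal in $N$, hence $1$ or $N$, giving minimality of $N$ in $G_{uv}$. The quotient $G_e/N$ embeds in the soluble group $\Out(N)$, so Lemma \ref{lem:cycliccentralizer} applies and forces $C_{G_e}(N)$ noncyclic, contradicting $C_{G_e}(N)=1$.

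Now suppose $G_v$ is almost simple with socle $T$. I would first use the unfaithfulness hypothesis: the kernel $K$ of $G_v$ on $\Gamma(v)$ is nontrivial and fixes $u$, so $1\neq K\leq G_{uv}$; since $C_{G_v}(T)=1$, every nontrivial normal subgroup of $G_v$ meets $T$ nontrivially and so contains $T$, giving $T\leq K\leq G_{uv}$. Choose $g\in G_e\setminus G_{uv}$; it swaps $u$ and $v$, normalises $G_{uv}$, and makes $T^g$ a nonabelian simple normal subgroup of $G_{uv}$. The crucial step is to show $T^g=T$: the only alternative is $T\cap T^g=1$, but then $[T,T^g]=1$ forces $T^g\leq C_{G_v}(T)=1$, which is absurd. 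Thus $T$ is normalised by both $G_{uv}$ and $g$, so $T\triangleleft G_e$.

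To close, I would apply Lemma \ref{lem:cycliccentralizer} with $N=T$: minimality in $G_{uv}$ is as before, $G_e/T$ is soluble because $G_v/T\leq\Out(T)$ is soluble and $[G_e:G_{uv}]=2$, and $C_{G_e}(T)\cap G_{uv}\leq C_{G_v}(T)=1$ combined with $[G_e:G_{uv}]=2$ yields $|C_{G_e}(T)|\leq 2$, a final contradiction. The main obstacle is the step $T^g=T$: I need to rule out that conjugation by $g$ could send $T$ to a disjoint simple normal subgroup of $G_{uv}$, and this is precisely where almost simplicity of $G_v$ (equivalently $C_{G_v}(T)=1$) is used.
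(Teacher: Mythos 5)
Your proof is correct and follows essentially the same route as the paper: in both cases the key point is that the simple socle ends up normal in both $G_{uv}$ and $G_e$, and the contradiction comes from Lemma \ref{lem:cycliccentralizer} (equivalently, in the $G_v$ case, your conclusion $T\triangleleft G_e$ together with $T\triangleleft G_v$ already contradicts Lemma \ref{lem:Gvnormalizer} directly, which is how the paper finishes that case). Your hands-on verification that $T^g=T$ is just an unwinding of the paper's one-line observation that $\soc(G_{uv})$ is characteristic in $G_{uv}\triangleleft G_e$.
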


\begin{proof}
 Let $K \trianglelefteq G_v$ be the kernel of the action of $G_v$ on $\Gamma(v)$ so that $K \trianglelefteq G_{uv}$, and suppose for a contradiction that $G_v$ is an almost simple group. Since $K \trianglelefteq G_v$ we have $K \geqslant \operatorname{soc}(G_v)$. As $K \trianglelefteq G_{uv} < G_v$, it follows that $G_{uv}$ is also an almost simple group with $\operatorname{soc}(G_{uv})=\operatorname{soc}(G_{v})$. Now $\operatorname{soc}(G_{uv})$ is a characteristic subgroup of $G_{uv}$, which in turn is normal in $G_e$. This  $\operatorname{soc}(G_v)=\operatorname{soc}(G_{uv})$ is normalised by both $G_v$ and $G_e$, contradicting Lemma \ref{lem:Gvnormaliser}.

 Next suppose that $G_e$ is an almost simple group. Then $G_{uv}$ is also an almost simple group with $\operatorname{soc}(G_{uv})=\operatorname{soc}(G_{e})$, and by the Schreier Conjecture, $N=\operatorname{soc}(G_{uv})$ satisfies the hypotheses of Lemma \ref{lem:cycliccentralizer}. However, $C_{G_e}(N)=1$ contradicting the fact that $C_{G_e}(N)$ must be noncyclic.
\end{proof}

 The next result is \cite[Lemma~4.1]{Lu2018}.
\begin{lem}\cite{Lu2018} \label{lem:lufaithful}
Let $\Gamma$ be a connected $d$-regular graph for $d \geqslant 3$, $e=\{u,v\} \in E(\Gamma)$ and $G \leqslant \operatorname{Aut}(\Gamma)$. If $\Gamma$ is $(G,2)$-arc-transitive and $G_v$ is faithful on $\Gamma(v)$, then $\Gamma$ is $(G,3)$-arc-transitive if and only if $d = 7, \operatorname{soc}(G_v) = \mathrm{A}_7$ and $G_e \ne \mathrm{S}_6$, i.e. $G_e = \PGL_2(9), \mathrm{M}_{10}$ or $\operatorname{Aut}(\mathrm{A}_6)$.
\end{lem}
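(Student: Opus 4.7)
My plan is to reformulate $(G,3)$-arc-transitivity as a group-theoretic condition on $G_v$ and then invoke the CFSG classification of finite $2$-transitive groups. Since $G_v$ is $2$-transitive and faithful on $\Gamma(v)$ of degree $d$, the group $L:=G_v$ is one of the known $2$-transitive groups, and $A:=G_{uv}$ acts faithfully and transitively on both $X:=\Gamma(v)\setminus\{u\}$ and $Y:=\Gamma(u)\setminus\{v\}$. The graph is $(G,3)$-arc-transitive if and only if $A$ is transitive on the product $X\times Y$. A necessary condition is that the two transitive $A$-sets $X$ and $Y$ are \emph{inequivalent}: otherwise the graph of an $A$-equivariant bijection $X\to Y$ is a proper $A$-invariant subset of $X\times Y$, contradicting transitivity. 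This already forces $A$ to admit at least two non-conjugate subgroups of index $d-1$.

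I would then sweep through the $2$-transitive groups $L$ of degree $d$. For $L=\mathrm{A}_n$ or $\mathrm{S}_n$ in natural action, $A=\mathrm{A}_{n-1}$ or $\mathrm{S}_{n-1}$ has a unique class of index-$(n-1)$ subgroups except in the exceptional case $n=7$, where $\mathrm{A}_6$ and $\mathrm{S}_6$ each possess two such classes (arising from the exotic outer automorphism of $\mathrm{S}_6$, equivalently from $\mathrm{A}_6\cong\PSL_2(9)$ admitting two classes of $\mathrm{A}_5$ subgroups). For the remaining families (affine groups, $\PSL_n(q)$, $\PSU_3(q)$, Suzuki and Ree groups, $\PSp_{2n}(2)$, and Mathieu and other sporadic $2$-transitive actions), either the necessary divisibility $d(d-1)^2\mid |L|$ fails or the point stabiliser admits only one class of index-$(d-1)$ subgroups; the handful of remaining small exceptional actions can be checked directly against the \textsc{Atlas}. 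This forces $d=7$ and $\soc(L)=\mathrm{A}_7$, so $G_v\in\{\mathrm{A}_7,\mathrm{S}_7\}$ and $A\in\{\mathrm{A}_6,\mathrm{S}_6\}$.

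For the final refinement on $G_e$, pick $g\in G_e\setminus A$ swapping $u$ and $v$; then $g$ normalises $A$ and the outer automorphism it induces on $A$ determines which of the two $6$-point $A$-actions occurs on $Y$ relative to $X$. If $G_v=\mathrm{A}_7$ and $G_e=\mathrm{S}_6$, then $g$ acts on $A=\mathrm{A}_6$ as an inner-to-$\mathrm{S}_6$ automorphism, preserving each class of index-$6$ subgroups, so $X$ and $Y$ are equivalent as $A$-sets and transitivity on $X\times Y$ fails; this excludes $G_e=\mathrm{S}_6$. In the remaining cases $G_e\in\{\PGL_2(9),\mathrm{M}_{10},\Aut(\mathrm{A}_6)\}$, $g$ induces the exotic outer automorphism, swapping the two classes and yielding inequivalent $A$-actions; a short character computation (the two degree-$6$ permutation characters of $\mathrm{A}_6$ share only the trivial irreducible) confirms that $A$ is indeed transitive on $X\times Y$, so the graph is $(G,3)$-arc-transitive. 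The principal obstacle is the CFSG sweep: most families are dispatched quickly by divisibility or subgroup structure, but the small classical and sporadic $2$-transitive actions require more careful bookkeeping.
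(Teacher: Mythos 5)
First, a point of reference: the paper does not prove this lemma at all --- it is imported verbatim as \cite[Lemma~4.1]{Lu2018} --- so there is no in-paper argument to compare against. Your overall strategy is the standard (and surely the intended) one: translate $(G,3)$-arc-transitivity into transitivity of $A=G_{uv}$ on $X\times Y$, equivalently the condition that the two degree-$(d-1)$ permutation characters of $A$ share only the trivial constituent, note that the action on $Y$ is the action on $X$ twisted by the automorphism of $A$ induced by $g\in G_e\setminus A$, and then sweep the finite $2$-transitive groups. Your treatment of the surviving case is correct: the two classes of $\mathrm{A}_5$ (resp.\ $\mathrm{S}_5$) in $\mathrm{A}_6$ (resp.\ $\mathrm{S}_6$) are fused exactly by the exotic outer automorphism, the two degree-$6$ permutation characters meet only in the trivial character, and conjugation by an element of $\mathrm{S}_6\setminus\mathrm{A}_6$ (or a central involution) preserves the class of point stabilisers, which kills $G_e=\mathrm{S}_6$ and $\mathrm{S}_6\times 2$.

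The genuine weak point is the sweep itself, which is where essentially all the content of the lemma lives. The dichotomy you assert for the non-alternating families --- ``either $d(d-1)^2\nmid |L|$ or the point stabiliser has a unique class of index-$(d-1)$ subgroups'' --- is false for several of them, and the survivors are not all ``small''. For $L=\AGL_6(2)$ of degree $64$ one has $63^2=3^4\cdot 7^2$ dividing $|\GL_6(2)|=2^{15}\cdot 3^4\cdot 5\cdot 7^2\cdot 31$, and $\GL_6(2)$ has two classes of core-free index-$63$ subgroups (stabilisers of points and of hyperplanes) which are fused by the inverse-transpose automorphism; only the finer criterion eliminates this case, since $\GL_6(2)$ has two orbits (incident and non-incident) on point--hyperplane pairs, i.e.\ the two permutation characters share the degree-$62$ constituent. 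The parabolic cases behave similarly: $\PSL_3(3)$ of degree $13$ and $\PSL_7(2)$ of degree $127$ both pass the divisibility test, so they too must be excluded by an orbit count on $X\times Y$ (or by exploiting the characteristic subgroup $O_p(A)$), not by the stated dichotomy. Since you do state the correct final criterion and intend a case check, this is a gap in execution rather than in method, but as written the sweep would not go through, and these non-alternating survivors are precisely the cases that require real work in \cite{Lu2018}.
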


\par Combining Lemmas \ref{lem:GeGvalmostsimple} and \ref{lem:lufaithful} provides a useful corollary:
\begin{cor} \label{cor:almostsimple}
Let $\Gamma$ be a nontrivial  $G$-edge-primitive graph with $e=\{ u,v \} \in E(\Gamma)$, and suppose that $\Gamma$ is $(G,3)$-arc-transitive. If $G_e$ or $G_v$ is an almost simple group, then $\operatorname{soc}(G_e)=\mathrm{A}_6, G_e \ne \mathrm{A}_6$ or $\mathrm{S}_6, \operatorname{soc}(G_v)=\mathrm{A}_7$ and $|G_v:G_{uv}|=7$.
\end{cor}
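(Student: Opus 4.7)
The plan is to derive the statement directly by chaining the two lemmas together, with essentially no new work needed.

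First I would handle faithfulness. If $G_v$ or $G_e$ is almost simple, then by the contrapositive of Lemma \ref{lem:GeGvalmostsimple} the vertex-stabiliser $G_v$ must act faithfully on $\Gamma(v)$. This is the only role Lemma \ref{lem:GeGvalmostsimple} plays.

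Next I would invoke Lemma \ref{lem:lufaithful}. Since $\Gamma$ is nontrivial, it is connected, arc-transitive, and of valency $d \geqslant 3$; since it is $(G,3)$-arc-transitive it is in particular $(G,2)$-arc-transitive. Together with the faithfulness established above, the hypotheses of Lemma \ref{lem:lufaithful} are satisfied, and its conclusion gives $d=7$, $\operatorname{soc}(G_v)=\mathrm{A}_7$, and $G_e \in \{\PGL_2(9),\,\mathrm{M}_{10},\,\operatorname{Aut}(\mathrm{A}_6)\}$. All three of these groups have socle $\mathrm{A}_6$, and none of them equals $\mathrm{A}_6$ or $\mathrm{S}_6$, which yields the first two conclusions. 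Finally, the valency equals the index of the arc-stabiliser in the vertex-stabiliser, so $|G_v:G_{uv}| = |\Gamma(v)| = d = 7$, giving the last conclusion.

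There is no real obstacle here: the proof is a two-line application of the preceding lemmas, and the only care required is to verify that the nontriviality hypothesis supplies the valency bound $d \geqslant 3$ needed to apply Lemma \ref{lem:lufaithful}, and that each of the three permitted possibilities for $G_e$ falls within the claimed description. Hence the statement is immediate.
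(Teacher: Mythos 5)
Your proof is correct and is essentially identical to the paper's own argument: the paper likewise splits on faithfulness of the local action, using Lemma \ref{lem:GeGvalmostsimple} to rule out the unfaithful case and Lemma \ref{lem:lufaithful} to read off the conclusion in the faithful case. Your version merely spells out the details (the valency bound from nontriviality and the identification of the three admissible edge-stabilisers) that the paper leaves implicit.
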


\begin{proof}
If the local action is unfaithful, then neither $G_e$ nor $G_v$ is almost simple by Lemma \ref{lem:GeGvalmostsimple}. If the local action is faithful, then the result follows from Lemma \ref{lem:lufaithful}.
\end{proof}

We present one final lemma which will prove useful in ruling out certain possibilities for~$G_v$.

\begin{lem} \label{lem:GvnotequalGe}
Let $\Gamma$ be a nontrivial  $G$-edge-primitive graph and $e = \{ u, v \} \in E(\Gamma)$. Then $|G_v|>|G_e|$.
\end{lem}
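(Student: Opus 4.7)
The plan is to compute the two orders in terms of the index of the arc-stabiliser $G_{uv} = G_u \cap G_v$ and then use the hypothesis that the valency is at least $3$.

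First I would note that since $\Gamma$ is nontrivial it is arc-transitive and has valency $d \geqslant 3$. Arc-transitivity means $G$ acts transitively on ordered pairs $(u,v)$ with $\{u,v\} \in E(\Gamma)$. Because $G$ is transitive on edges but also flips some edge (by arc-transitivity there is some $g \in G$ sending the arc $(u,v)$ to $(v,u)$, and this $g$ lies in $G_e$), the arc-stabiliser $G_{uv}$ has index exactly $2$ in the edge-stabiliser $G_e = G_{\{u,v\}}$; equivalently, $|G_e| = 2|G_{uv}|$.

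Next I would use that $G_v$ acts transitively on the neighbourhood $\Gamma(v)$, with point stabiliser of $u \in \Gamma(v)$ equal to $G_u \cap G_v = G_{uv}$. The orbit-stabiliser theorem then gives $|G_v : G_{uv}| = |\Gamma(v)| = d$, so $|G_v| = d \cdot |G_{uv}|$.

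Combining the two identities yields
\[
\frac{|G_v|}{|G_e|} \;=\; \frac{d \cdot |G_{uv}|}{2 \cdot |G_{uv}|} \;=\; \frac{d}{2} \;\geqslant\; \frac{3}{2} \;>\; 1,
\]
and hence $|G_v| > |G_e|$, as required. There is really no obstacle here; the only subtlety is to check that arc-transitivity genuinely forces $[G_e : G_{uv}] = 2$ (rather than $1$), which uses the existence of an edge-flipping element guaranteed by arc-transitivity, and that the valency bound $d \geqslant 3$ follows from the definition of \emph{nontrivial} given just before Lemma~\ref{prop:cosetgraph}.
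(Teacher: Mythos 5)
Your argument is correct and is essentially the paper's own proof, just written out in full: the paper simply observes that nontriviality gives valency $|G_v:G_{uv}|\geqslant 3$ while $|G_e:G_{uv}|=2$, which is exactly the comparison you make. Your extra care in justifying $[G_e:G_{uv}]=2$ via an edge-flipping element is sound and matches the standing setup stated in the preliminaries.
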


\begin{proof}
This is immediate from the fact that $\Gamma$ is nontrivial, as this implies the valency of $\Gamma$ is $|G_v:G_{uv}| \geqslant 3$.
\end{proof}

\par We now outline our method of proving Theorems \ref{thm:Anspor} and \ref{thm:faithful}. Let $\Gamma$ and $G$ be as in either theorem. By Lemmas \ref{prop:cosetgraph} and \ref{lem:isomorphicgraphs}, to classify such graphs it suffices to classify subgroup lattices $L=(G,E,A,H)$ up to $\Aut(G)$-conjugation, where $E \max G$, $A<E$ with $|E:A|=2$ and $A<H<G$ with $T=\soc(G) \not\leqslant H$. Moreover, by Corollary \ref{cor:almostsimple}, if either $E$ or $H$ is almost simple then both are with $\soc(E)=\mathrm{A}_6, \soc(H)=\mathrm{A}_7$, $|H:A|=7$ and $E \ne \mathrm{A}_6$ or $\mathrm{S}_6$. For each such lattice $L$, we consider the coset graph $\Gamma$ obtained from $L$. Clearly $G \leqslant \Aut(\Gamma)$, and so to determine if $G$ is the full automorphism group it suffices to check \cite{LPS} (which lists containments of primitive groups). If indeed $G = \Aut(\Gamma)$, we then check if $\Gamma$ is 3-arc-transitive: if either $E$ or $H$ is almost simple then both must be as in Lemma \ref{lem:lufaithful}, and if this is the case then $\Gamma$ is indeed 3-arc-transitive; if neither $E$ nor $H$ is almost simple, then we investigate the graph further. In most cases we can rule out the lattice $L$ by proving that the local action of $\Gamma$ cannot be 2-transitive (this is equivalent to showing that the action of $H$ on the set of right cosets of $A$ is not 2-transitive), and so $\Gamma$ is not 2-arc-transitive.

\section{Alternating groups} \label{sec:alternatinggroups}
 In this section, we consider the case where $G$ is almost simple with $\operatorname{soc}(G)=\mathrm{A}_n$ for $n \geqslant 5$.
The main result of this section is the following:

\begin{prop} \label{prop:alternatinggroups}
Let $\Gamma$ be a nontrivial connected edge-primitive 3-arc-transitive graph with $G=\operatorname{Aut}(\Gamma)$ and $\operatorname{soc}(G)=\mathrm{A}_n$ for some $n \geqslant 5$. Then $\Gamma$ is isomorphic to Tutte's 8-cage (described in Construction \ref{con:gamma0} below) and $G=\Aut(\mathrm{A}_6)$.
\end{prop}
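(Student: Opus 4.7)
The plan is to follow the strategy laid out in Section \ref{sec:prelim}: classify the lattices $(G,E,A,H)$ arising from Lemma \ref{prop:cosetgraph} up to $\Aut(G)$-conjugacy, and for each resulting coset graph verify that it is genuinely $3$-arc-transitive. By Corollary \ref{cor:almostsimple} the argument cleanly splits according to whether the local action of $G_v$ on $\Gamma(v)$ is faithful.

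In the \emph{faithful} case, Corollary \ref{cor:almostsimple} pins down $G_e \in \{\PGL_2(9), \mathrm{M}_{10}, \Aut(\mathrm{A}_6)\}$, together with $\soc(G_v)=\mathrm{A}_7$ and $|G_v : G_{uv}| = 7$, so $A \in \{\mathrm{A}_6, \mathrm{S}_6\}$ and $H \in \{\mathrm{A}_7, \mathrm{S}_7\}$. Since $\mathrm{A}_7 \leqslant H \leqslant G$, one needs $n \geqslant 7$; for $n = 7$ the subgroup $H$ contains $\soc(G) = \mathrm{A}_7$ and so violates core-freeness; for $n \geqslant 8$ the maximal subgroup $E$ would be an almost simple maximal subgroup of $\mathrm{A}_n$ or $\mathrm{S}_n$ with socle $\mathrm{A}_6$, which by inspection of the O'Nan--Scott description of the maximal subgroups of $\mathrm{A}_n$ exists only for a short list of small $n$ related to the exceptional degree-$10$ action $\mathrm{A}_6 \cong \PSL_2(9)$ on the projective line. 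In each such case I would directly check that there is no compatible index-$7$ overgroup $H$ of $A$ in $G$, ruling the faithful branch out entirely.

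In the \emph{unfaithful} case, Lemma \ref{lem:GeGvalmostsimple} ensures that neither $G_e$ nor $G_v$ is almost simple, so I would run through the non-almost-simple O'Nan--Scott types for $E = G_e \max G$: the intransitive type $E = (\mathrm{S}_k \times \mathrm{S}_{n-k}) \cap G$ with $1 \leqslant k < n/2$; the transitive imprimitive type $E = (\mathrm{S}_k \wr \mathrm{S}_{n/k}) \cap G$ with $1 < k < n$; and the primitive non-almost-simple families (affine, diagonal, product, twisted wreath). For each type one enumerates the index-$2$ subgroups $A \leqslant E$, then the core-free overgroups $H$ of $A$ in $G$ with $H \neq E$ and $|H:A| \geqslant 3$ (using Lemma \ref{lem:GvnotequalGe}). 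The decisive filter, expected to kill nearly every candidate, is that $H$ must act $2$-transitively on $H/A$ (since $3$-arc-transitivity implies $2$-arc-transitivity); this forces $A$ to be maximal in $H$ and the induced permutation representation $H^{H/A}$ to be $2$-transitive.

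The only lattice I expect to survive is $G = \Aut(\mathrm{A}_6)$, $E = [2^5]$, $A = [2^4]$, $H = \mathrm{S}_4 \times \mathrm{S}_2$, whose coset graph is Tutte's $8$-cage as in Table \ref{tab:Anspor}. I would close the argument by constructing this graph explicitly (via the construction referenced in the statement) and verifying edge-primitivity and $3$-arc-transitivity directly (in fact it is Tutte's celebrated $5$-arc-transitive cubic graph). The step I anticipate as hardest is the systematic elimination in the intransitive subcase of the unfaithful branch: the factor $\mathrm{S}_k$ admits many index-$2$ subgroups, and each admits many overgroups inside $\mathrm{A}_n$, so closing off every remaining triple requires carefully combining the $2$-transitivity of the local action with a disciplined analysis of how overgroups of $A$ inside the intransitive factor $\mathrm{S}_k \times \mathrm{S}_{n-k}$ can lift to overgroups in $G$.
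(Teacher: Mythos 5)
Your overall strategy --- classify the lattices $(G,E,A,H)$, run through the O'Nan--Scott types of the maximal subgroup $E$, and use $2$-transitivity of $H$ on $H/A$ as the main filter --- is exactly the paper's, and your faithful/unfaithful split is a harmless reorganisation (the faithful branch is indeed empty, by the paper's Lemma \ref{lem:a6a7maximal} and Corollary \ref{cor:almostsimplealternatinggroups}). But there is a genuine gap: you never treat $n=6$ separately, and your enumeration of the possibilities for $E$ silently assumes $G\leqslant \mathrm{S}_n$. When $\soc(G)=\mathrm{A}_6$ the group $G$ may be $\mathrm{M}_{10}$, $\PGL_2(9)$ or $\Aut(\mathrm{A}_6)$, none of which embeds in $\mathrm{S}_6$, so the intransitive/imprimitive/primitive trichotomy for maximal subgroups of $\mathrm{A}_n$ and $\mathrm{S}_n$ simply does not apply to $E\max G$. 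This is fatal rather than cosmetic, because the unique example lives precisely there: in the surviving lattice $G=\Aut(\mathrm{A}_6)$, $E=[2^5]$ is a Sylow $2$-subgroup of $G$ and is of none of the types you list. The paper deals with this by a separate, exhaustive enumeration of all lattices for $\soc(G)=\mathrm{A}_6$ (Table \ref{tab:a6} and Lemma \ref{lem:a6}), which turns up several candidates (giving $\mathrm{K}_6$, $\mathrm{K}_{10}$, $\mathrm{K}_{6,6}$, and three lattices all realising the $8$-cage) that your method would neither find nor eliminate. As written, your argument cannot produce the $8$-cage, nor certify that nothing else occurs for these three groups.

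Two smaller points. In the faithful branch, an almost simple maximal $E$ with $\soc(E)=\mathrm{A}_6$ does not arise only from the exceptional degree-$10$ action: there are primitive actions of degree $6,15,36,45$, and for $n=7$ the point stabiliser $\mathrm{A}_6\max\mathrm{A}_7$ is intransitive yet almost simple; closing this off properly requires the comparison of possible primitive degrees for $\mathrm{A}_6$ and $\mathrm{A}_7$ carried out in Lemma \ref{lem:a6a7maximal}. And your assessment of where the difficulty lies is inverted: the intransitive case is dispatched quickly by Jordan's theorem (a $3$-cycle in $A$ fixing four points forces $H$ imprimitive or intransitive), whereas the real work is the product-type primitive case, where one must classify all overgroups of $\left(\mathrm{S}_m\times\mathrm{S}_m\right)\cap G$ in $G$ (the paper's Lemma \ref{lem:productprelim}) before the $2$-transitivity filter can be applied; "enumerate the overgroups of $A$" is precisely the step that needs an argument there.
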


\begin{construction}[Tutte's 8-cage] \label{con:gamma0} \sloppy Let $G=\operatorname{Aut}(\mathrm{A}_6)$. We define a coset graph  ${\Gamma_0=\operatorname{Cos}(G, H, HgH)}$ as in Lemma \ref{prop:cosetgraph} where $G, E, A$ and $H$ are as in Line 10 of Table \ref{tab:a6} and $g \in E \backslash A$. We note that $\Gamma_0$ is defined in \cite[Line 1, Table~2]{Li2011}, and from here we see that $\Gamma_0$ is 5-arc-transitive.
\end{construction}

\par To prove Proposition \ref{prop:alternatinggroups} we follow the method outlined at the end of Section \ref{sec:prelim}. Many of our preliminary lemmas are only for edge-primitive graphs instead of edge-primitive 3-arc-transitive graphs. We will use the following general set up.

\begin{hyp}\label{hyp:An}
Let $\Gamma=(V(\Gamma), E(\Gamma))$ be a nontrivial connected edge-primitive graph such that $G=\Aut(\Gamma)$ is an almost simple group with $\soc(G)=\mathrm{A}_n$. Let $e=\{u,v\}$ be an edge in $\Gamma$ and define the subgroups $E=G_e$, $H=G_v$ and $A=G_{uv}=E\cap H$. Note that $E\max G$, $|E:A|=2$ and $\soc(G) \not\leqslant H$.
\end{hyp}

 We proceed by considering each type of maximal subgroup $E$ of $G$. First assume $n \ne 6$. By the O'Nan Scott theorem, $E$ is either intransitive, imprimitive or primitive in its natural action, and if $E$ is primitive then it is either of affine, diagonal, product or almost simple type (see for example \cite{LPS}). The case where $E$ is intransitive or imprimitive is dealt with in Section \ref{sec:intransitive}, and the remaining cases are considered in Section \ref{sec:primitive}. The case where $n=6$ is different due to an exceptional outer automorphism, and we consider this case separately in Section \ref{sec:a6}.

\subsection{The case $\operatorname{soc}(G)=\mathrm{A}_6$}  \label{sec:a6}

\par Using \textsc{Magma} \cite{magma},  we list in Table \ref{tab:a6} all quadruples $(G, E, A, H)$ (up to $\Aut(G)$-conjugacy) such that $G$ is almost simple with $\operatorname{soc}(G)=\mathrm{A}_6$, $A<E \operatorname{max} G, |E:A|=2$ and $A \leqslant H$ where $\soc(G) \not\leqslant H$. In all cases we find that either $H \max G$ or $H \max \frac{1}{2}G$; we list the groups $S$ such that $H \max S$.

\begin{table}[!h]
\caption{Possible subgroup lattices for $\operatorname{soc}(G)=\mathrm{A}_6$} 
\label{tab:a6}
\centering
\begin{tabular}{c c c c c c} 
$G$ & $E$ & $A$ & $H$  &  $S$ & Notes \\ \hline \\[-2.5ex]
$\mathrm{A}_6$ & $\mathrm{S}_4$ & $\mathrm{A}_4$ & $\mathrm{A}_5$ & $G$  &\\
$\mathrm{S}_6$ & $\mathrm{S}_4 \times \mathrm{S}_2$ & $\mathrm{S}_4$ & $\mathrm{S}_5$ & $G$ &\\
$\mathrm{M}_{10}$ & $5:4$ & $5:2$ & $\mathrm{A}_5$ &  $\mathrm{A}_6$ \\
& $8:2$ & $\mathrm{D}_{8}$ & $\mathrm{S}_4$ & $\mathrm{A}_6$ \\
& $8:2$ & $\mathrm{Q}_8$ & $3^2:\mathrm{Q}_8$ & $G$ \\
$\PGL_2(9)$ & $\mathrm{D}_{20}$ & $\mathrm{D}_{10}$ & $\mathrm{A}_5$ & $\mathrm{A}_6$ \\
& $\mathrm{D}_{16}$ & $\mathrm{D}_8$ & $\mathrm{S}_4$ & $\mathrm{A}_6$ \\
& $\mathrm{D}_{16}$ & $8$ & $3^2:8$ & $G$ \\
$\operatorname{Aut}(\mathrm{A}_6)$ & $10:4$ & $\AGL_1(5)$ & $\mathrm{S}_5$  & $\mathrm{S}_6$ \\
& $[2^5]$ & $[2^4]$ & $\mathrm{S}_4 \times \mathrm{S}_2$ & $\mathrm{S}_6$ & Tutte's 8-cage \cite{Tutte47} \\
& $[2^5]$ & $[2^4]$ & $3^2:[2^4]$  & $G$ \\
 \hline
\end{tabular}
\end{table}

\begin{lem} \label{lem:a6}
Suppose that $\Gamma$ is as in Hypothesis \ref{hyp:An} with $n=6$. Then $\Gamma$ is isomorphic to either $\mathrm{K}_6$ or Tutte's 8-cage. In particular, if $\Gamma$ is also 3-arc-transitive, then $\Gamma$ is isomorphic to Tutte's 8-cage.
\end{lem}

\begin{proof}
\par We have that $(G,E, A, H)$ is isomorphic to a quadruple from Table \ref{tab:a6}, and from such a quadruple we can recover $\Gamma$ by Lemma \ref{prop:cosetgraph}. If $L$ is as in Line 1 or 2 then $G$ is 2-transitive on the set of right cosets of $H$, and so $\Gamma=\mathrm{K}_6$. Similarly, if $L$ is as in Line 5, 8 or 11 then $\Gamma \cong \mathrm{K}_{10}$. However, in this case $\Aut(\Gamma) = \mathrm{S}_{10}$. In lines 3, 6, and 9, $\Gamma$ is bipartite with bipartition stabiliser $\frac{1}{2}G$ isomorphic to $\mathrm{A}_6$ or $\mathrm{S}_6$. Moreover, if $\left\{ \Delta_1, \Delta_2 \right\}$ is the bipartition and $H$ is the stabiliser of a vertex in $\Delta_1$ then $H$ is transitive on $\Delta_2$. Thus $\Gamma \cong \mathrm{K}_{6,6}$. However, in this case $\operatorname{Aut}(\Gamma)=\mathrm{S}_6 \wr \mathrm{S}_2$.
\par For the remaining lines 4, 7 and 10 of Table \ref{tab:a6} we use \textsc{Magma} \cite{magma} to construct these graphs (using the CosetGeometry and Graph commands) and show that these graphs are isomorphic to Tutte's 8-cage as in Construction \ref{con:gamma0} above. This completes the proof.
\end{proof}

\subsection{Intransitive and imprimitive subgroups} \label{sec:intransitive}
By Section \ref{sec:a6}, we assume $n \ne 6$ for the remainder of this section.  We begin by considering the case where $E$ is an intransitive subgroup.

\begin{lem} \label{lem:intransitive}
Let $\Gamma$ be as in Hypothesis \ref{hyp:An} with $n\neq 6$ and suppose that  $E$ is intransitive, so that $E = \left( \mathrm{S}_k \times \mathrm{S}_{n-k} \right) \cap G$. Then $k=2$, $H \cong \mathrm{S}_{k-1} \cap G$ and $\Gamma \cong \mathrm{K}_n$.
\end{lem}

\begin{proof}
\par If $n=5$, then the result follows using \textsc{Magma} \cite{magma}. We may therefore assume $n \geqslant 7$.
\par We have $E= \left( \mathrm{S}_k \times \mathrm{S}_{n-k} \right) \cap G$ for some $1 \leqslant k <\frac{n}{2}$. Without loss, we may assume $E$ preserves the partition $\{ 1, \dots, k \} \cup \{ k+1, \dots, n \}$. Therefore $A \geqslant \mathrm{A}_k \times \mathrm{A}_{n-k}$ (where we define $\mathrm{A}_1=\mathrm{A}_2=1$). Hence $A$ contains a 3-cycle fixing at least 4 points, and so by a theorem of Jordan (see \cite[Theorem~3.3E]{DM}) $H$ is not primitive. Therefore $H$ is either intransitive or imprimitive.
\par First suppose that $H$ is imprimitive, so that $H \leqslant \mathrm{S}_a \wr \mathrm{S}_b$ for some $1< a, b \leqslant \frac{n}{2}$ such that $n=ab$. Observe that $A$ is $(n-k-2)$-transitive on $\{k+1, \dots, n\}$. In particular, $A$ is primitive on $\{k+1, \dots, n\}$, and so either $a>n-k$ or $b>n-k$, a contradiction.
\par We therefore have $H$ intransitive. If $k \ne 2$ then $A$ is transitive on $\{ 1, \dots, k\}$ and $\{ k+1, \dots, n \}$, and so $H \leqslant E$, contradicting Lemma \ref{lem:GvnotequalGe}. Therefore $k=2$ and $A$ is intransitive on $\{ 1, 2 \}$. As $A_{n-2}\leqslant A^{ \{ 3, \dots, n \} } $ and $|H| > |E|$ we must have $H = \left( \mathrm{S}_1 \times \mathrm{S}_{n-1} \right) \cap G$. Since $G$ is 2-transitive on the set of right cosets of $H$ this implies $\Gamma \cong \mathrm{K}_n$, completing the proof.
\end{proof}

\par We now consider the case where $E$ is a transitive and imprimitive subgroup. We first give an example of such a graph.

\begin{construction} \label{con:gamma1} Let $G=\mathrm{S}_8$ and $E \operatorname{max} G$ be an imprimitive subgroup of the form $E = \mathrm{S}_2 \wr \mathrm{S}_4$. Let $A=E \cap \mathrm{A}_8\cong C_2^3:S_4$. Then $|E:A|=2$ and there is a subgroup $H\cong\AGL_3(2)$ such that  $A < H < \mathrm{A}_8$. Thus $(G,E,A,H)$ is a quadruple as in Lemma \ref{prop:cosetgraph}. We therefore have a nontrivial connected $G$-edge-primitive arc-transitive graph $\Gamma_1=\operatorname{Cos}(G,H,HgH)$ for some $g \in E \backslash A$. We see that $\Gamma_1$ has order $|G:H|=30$ and valency $|H:A|=7$. Moreover, as $H<A_8$ and $|G:A_8|=2$ we have that $\Gamma_1$ is bipartite. Furthermore, the action of $H$ on the set of right cosets of $A$ is permutationally isomorphic to the action of $\GL(3,2)$ on 7 points, which is 2-transitive. Hence $\Gamma_1$ is 2-arc-transitive. However, using \textsc{Magma} \cite{magma} we find that $\Gamma_1$ has girth equal to 4 and so there is a 3-arc $(v_0,v_1,v_2,v_0)$. Since $\Gamma_1$ is bipartite and vertex-transitive, each bipartite half has size 15. As $\Gamma_1$ has valency 7 it follows that $\Gamma_1$ has diameter at least 3 and  so there is a 3-arc $(v_0,v_1,v_2,v_3)$ with $v_3\neq v_0$. Thus $\Gamma_1$ is not 3-arc-transitive.
\end{construction}

\begin{lem} \label{lem:imprimitive}
Let $\Gamma$ be as in Hypothesis \ref{hyp:An} with $n\neq 6$ and suppose that $E$ is imprimitive, so that $E=\left( \mathrm{S}_m \wr \mathrm{S}_k \right) \cap G$. Then $n=8$, $m=2$ and $\Gamma \cong \Gamma_1$ as in Construction \ref{con:gamma1}.
\end{lem}

\begin{proof}
\par Let $E= \left( \mathrm{S}_m \wr \mathrm{S}_k \right) \cap G$, where $1 < k,m \leqslant \frac{n}{2}$ and $n=mk$. Without loss of generality, we may assume that $E$ preserves blocks of imprimitivity of the form $\mathscr{B}_i=\left\{ (i-1)m+1, \dots, im \right\}$ for $1 \leqslant i \leqslant k$. Let $B = \left( \mathrm{S}_m^k \right) \cap G < E$ be the base of the wreath product. As neither $\mathrm{A}_m^k$ nor $\mathrm{A}_k$ contains a subgroup of index 2, $A \geqslant \mathrm{A}_m \wr \mathrm{A}_k$. 
\par First suppose $m>2$. As $A$ is transitive on $\{ 1, \dots, n\}$, so is $H$. Suppose $H$ is imprimitive, preserving a system of imprimitivity with blocks $\mathscr{C}_j$ for $1 \leqslant j \leqslant \ell$. Then $B \cap A$ preserves the partition $\mathscr{B}_i = \bigcup_{1 \leqslant j \leqslant \ell} \left( \mathscr{B}_i \cap \mathscr{C}_j \right)$ for all $1 \leqslant i \leqslant k$. But as $B \cap A \geqslant \mathrm{A}_m^k$ acts primitively on $\mathscr{B}_i$ we must have $\mathscr{B}_i \cap \mathscr{C}_j = \mathscr{B}_i$ or $\varnothing$ for all $1 \leqslant i \leqslant k$, or $|\mathscr{B}_i \cap \mathscr{C}_j|=1$ or $0$ for all $1 \leqslant i \leqslant k$. In the first case, since $\mathrm{A}_k$ acts primitively on the set of blocks $\left\{ \mathscr{B}_i \right\}$ we have $H=E$, contradicting Lemma \ref{lem:GvnotequalGe}. In the second case, if $ \mathscr{B}_{i_1} \cap \mathscr{C}_j \ne \varnothing$ and $\mathscr{B}_{i_2} \cap \mathscr{C}_j \ne \varnothing$ for some $i_1, i_2$ and $j$, then $B$ contains elements acting transitively on $\mathscr{B}_{i_1}$ whilst fixing $\mathscr{B}_{i_2} \cap \mathscr{C}_j$, implying $\mathscr{B}_{i_1} \subseteq \mathscr{C}_j$, a contradiction. Therefore for each $1 \leqslant j \leqslant \ell$, $\mathscr{B}_i \cap \mathscr{C}_j \ne \varnothing$ for exactly one value of $i$. But this implies that $|\mathscr{C}_j|=1$, a contradiction. Therefore $H$ is primitive. But $A$ contains a 3-cycle fixing at least 3 points, contradicting \cite[Theorem~3.3E]{DM}.
\par We must therefore have $m=2$, and so $k \geqslant 4$. Observe that $A \geqslant \mathrm{A}_k$, and so $A$ is transitive on the set of blocks $\mathscr{B}_i$ for $1 \leqslant i \leqslant k$; in particular, $\left( B \cap A \right)^{\mathscr{B}_i}\cong \left( B \cap A \right)^{\mathscr{B}_j}$ for all $i$ and $j$, and so we must have $\left( B \cap A \right)^{\mathscr{B}_i} \cong \mathrm{S}_2$ (otherwise $|E:A| \geqslant 2^{k-1}$). Hence $A$, and therefore $H$, is transitive on $\{ 1, \dots, n \}$. As before, $H$ is not imprimitive, and so $H$ is primitive. Observe that $E$ contains the elements $(1 \ 2)(3 \ 4), (1 \ 2)(5 \ 6)$ and $(3 \ 4)(5 \ 6)$, and so $A$ contains at least one of these elements since $|E:A|=2$. By \cite[Lemma~1.2]{Pr2}, we must therefore have $n=8$ and $H=\AGL_3(2)$. As $E \operatorname{max} G$, we must have $G=\mathrm{S}_8$ by \cite{LPS}. As $A = E \cap H$, we now have $(G,E,A,H)$ as in $\Gamma_1$ above, and so $\Gamma \cong \Gamma_1$. This completes the proof.  
\end{proof}

\subsection{Primitive subgroups} \label{sec:primitive}
 We now consider the case where $E$ is primitive. By the O'Nan-Scott theorem, $E$ is of affine, diagonal, product or almost simple type. We consider each type of primitive group, starting with the affine case.

\begin{lem} \label{lem:affine}
Let $\Gamma$ be as in Hypothesis \ref{hyp:An} with $n\neq 6$ and suppose further that $\Gamma$ is a 3-arc-transitive graph. Then $E$ is not of affine type.
\end{lem}

\begin{proof}
\par Suppose otherwise for a contradiction, and let $E=\AGL_d(p) \cap G$ where $n=p^d$. Let $N = \operatorname{soc}(E) \cong p^d$. As $N$ is the unique minimal normal subgroup of $E$, we have $N \leqslant A$. Therefore $A$ contains $N \rtimes \SL_d(p)$; in particular, $A$ is 2-transitive on $\{1,\ldots,n\}$ if $d >1$ and $A$ is primitive in all cases. By \cite{Pr1}, one of the following holds:
\begin{enumerate}
\item $H \leqslant \AGL_d(p)$ (a natural inclusion);
\item $H \leqslant \mathrm{S}_{n_0} \wr \mathrm{S}_m$ for some $n_0=p^e$ such that $n=n_0^m$ in product action (a blow-up of an exceptional inclusion);
\item $(A, H)$ is listed in \cite[Table~2]{Pr1} (an exceptional inclusion).
\end{enumerate}
\noindent In the first case we have $|H| \leqslant |E|$, contradicting Lemma \ref{lem:GvnotequalGe}, and the second case is not possible as either $A$ is 2-transitive or $n$ is a prime. Therefore $H$ lies in \cite[Table~2]{Pr1}. However, this implies that $H$ is an almost simple group not isomorphic to $\mathrm{A}_7$ or $\mathrm{S}_7$, contradicting Corollary \ref{cor:almostsimple}. 
\end{proof}

\par Next we consider the case where $E$ is of product type. We first require a preliminary result.

\begin{lem} \label{lem:productprelim}
Let $G$ be an almost simple group with $\soc(G)=\mathrm{A}_n$ and $n=m^2$ for some $m \geqslant 5$, and let $M \max G$ be of product type with $M = \left( \mathrm{S}_m \wr \mathrm{S}_2 \right) \cap G$. Suppose that  $Y=\frac{1}{2}M$ is imprimitive and $Y \leqslant X \leqslant G$ such that $X\neq M$. Then either  $X$ is primitive of almost simple type, or $X$ belongs to one of two chains
\begin{align*}
Y \max N  \max L \max G \ &\text{if $m$ is odd,} \\
Y \max N  \max S \max L \max G \  &\text{if $m$ is even,} 
\end{align*}
\noindent where $L=\left(\mathrm{S}_m \wr \mathrm{S}_m \right) \cap G$ is an imprimitive subgroup, 
$$
S=\left\{(g_1, \dots, g_m)h \in L\mid (g_1, \dots, g_m) \in \mathrm{S}_m^m, h \in \mathrm{S}_m, \sum_{1 \leqslant i \leqslant m} \mathrm{sgn}(g_i) \equiv 0 \pmod{2}\right\}
$$
and 
$$
N=\left\{(g_1, \dots, g_m)h \in L\mid (g_1, \dots, g_m) \in \mathrm{S}_m^m, h \in \mathrm{S}_m, \mathrm{sgn}(g_i) = \mathrm{sgn}(g_j) \textrm{ for all } 1 \leqslant i, j \leqslant m\right\}.
$$
\end{lem}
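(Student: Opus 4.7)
The plan is first to reduce the classification of overgroups of $Y$ in $G$ to classifying overgroups inside a single row-wreath $L$, and then to perform a Scott-type module-theoretic analysis. Since $Y = \tfrac{1}{2}M$ is imprimitive and the only imprimitive index-$2$ subgroup of $M = (\mathrm{S}_m \wr \mathrm{S}_2) \cap G$ is its base, we have $Y = (\mathrm{S}_m \times \mathrm{S}_m) \cap G$, so $Y$ preserves the row and column block systems on $[m] \times [m]$. A direct argument using $Y \supseteq (\mathrm{A}_m)^2$ shows that rows and columns are the only nontrivial $Y$-invariant equivalence relations on $[m] \times [m]$, so any imprimitive overgroup $X$ of $Y$ in $G$ lies in $L$ or in its column analogue (and these are conjugate via the swap element of $M \setminus Y$); we may assume $X \leqslant L$. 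A primitive overgroup of $Y$ cannot be of affine or diagonal type (each has a socle incompatible with containing $(\mathrm{A}_m)^2$ for $m \geqslant 5$), is of product type only if it equals $M$ itself, and must otherwise be almost simple.

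Within $L = (\mathrm{S}_m)^m \rtimes \mathrm{S}_m$, I would identify $Y$ as the internal direct product $\Delta(\mathrm{S}_m) \times \mathrm{S}_m^{\mathrm{top}}$, where $\Delta(\mathrm{S}_m)$ is the diagonal embedding in the base $B = (\mathrm{S}_m)^m$ and $\mathrm{S}_m^{\mathrm{top}}$ is the top factor permuting blocks. Since $Y$ surjects onto the top $\mathrm{S}_m$, any overgroup $X$ of $Y$ in $L$ has the form $X = X_0 \cdot \mathrm{S}_m^{\mathrm{top}}$, with $X_0 := X \cap B$ an $\mathrm{S}_m$-invariant subgroup of $B$ containing $\Delta(\mathrm{S}_m)$, where $\mathrm{S}_m$ acts on $B$ by coordinate permutation. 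The problem thus reduces to classifying such $X_0$.

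I would analyse $X_0$ through the pair of invariants $X_0 \cap (\mathrm{A}_m)^m$ and $X_0 (\mathrm{A}_m)^m / (\mathrm{A}_m)^m \leqslant B/(\mathrm{A}_m)^m \cong (\mathbb{Z}/2\mathbb{Z})^m$. A Scott-type classification of subdirect products of the nonabelian simple group $\mathrm{A}_m$ (using that the only $\mathrm{S}_m$-invariant partitions of $[m]$ are the trivial and singleton partitions, and that containment of $\Delta(\mathrm{A}_m)$ rules out all automorphism twists) forces $X_0 \cap (\mathrm{A}_m)^m \in \{\Delta(\mathrm{A}_m), (\mathrm{A}_m)^m\}$. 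Separately, the image of $X_0$ in $(\mathbb{Z}/2\mathbb{Z})^m$ is an $\mathrm{S}_m$-submodule containing the diagonal line $D$; using irreducibility of the standard module and of the ``heart'' $Z/D$ (with $Z = \{(x_i) : \sum x_i = 0\}$) as $\mathbb{F}_2\mathrm{S}_m$-modules, this image is $\{D, (\mathbb{Z}/2\mathbb{Z})^m\}$ when $m$ is odd (since $D \not\leqslant Z$) and $\{D, Z, (\mathbb{Z}/2\mathbb{Z})^m\}$ when $m$ is even (since $D \leqslant Z$). The combination ``$X_0 \cap (\mathrm{A}_m)^m = \Delta(\mathrm{A}_m)$ with image strictly larger than $D$'' is impossible: $\Delta(\mathrm{A}_m) = X_0 \cap (\mathrm{A}_m)^m$ is normal in $X_0$, but direct conjugation in $B$ shows that $\Delta(\mathrm{A}_m)$ is normalised only by elements of $\Delta(\mathrm{S}_m)$ (using $C_{\mathrm{S}_m}(\mathrm{A}_m) = 1$), forcing $X_0 \leqslant \Delta(\mathrm{S}_m)$ and contradicting the larger image. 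The surviving cases give $X_0 \in \{\Delta(\mathrm{S}_m), N \cap B, B\}$ for $m$ odd and $X_0 \in \{\Delta(\mathrm{S}_m), N \cap B, S \cap B, B\}$ for $m$ even, yielding exactly the two chains in the statement; each inclusion is maximal because no intermediate subgroup appears in the classification.

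The main obstacle will be the module-theoretic input, namely verifying that $Z/D$ is irreducible as an $\mathbb{F}_2\mathrm{S}_m$-module for $m$ even so that the lattice of $\mathrm{S}_m$-submodules of $(\mathbb{Z}/2\mathbb{Z})^m$ containing $D$ is exactly $\{D, Z, (\mathbb{Z}/2\mathbb{Z})^m\}$, together with the Scott-style analysis of $\mathrm{S}_m$-invariant subgroups of $(\mathrm{A}_m)^m$ containing $\Delta(\mathrm{A}_m)$; the normaliser argument via $C_{\mathrm{S}_m}(\mathrm{A}_m) = 1$ is the essential tool for ruling out ``mixed'' twisted diagonals that would otherwise survive the two invariants in isolation.
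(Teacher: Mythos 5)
Your overall strategy coincides with the paper's: identify $Y = (\mathrm{S}_m \times \mathrm{S}_m) \cap G$, show that the row and column partitions are the only nontrivial block systems preserved by $Y$ (so any imprimitive overgroup lies in one of the two conjugates of $L$), decompose an overgroup $X \leqslant L$ as $(X \cap B)\,\mathrm{S}_m^{\mathrm{top}}$ via Dedekind's law, and classify $X \cap B$ by Scott's lemma together with the submodule lattice of the $\mathbb{F}_2\mathrm{S}_m$-permutation module. Your treatment of the part inside $L$ is correct and in fact more explicit than the paper's, which compresses it into the assertion that $Y$ is maximal in $N$ plus the submodule computation for subgroups containing $\mathrm{A}_m \wr \mathrm{S}_m$; your normaliser argument via $C_{\mathrm{S}_m}(\mathrm{A}_m)=1$ is precisely the detail needed to exclude intermediate subgroups not containing $\mathrm{A}_m^m$, and the module facts you flag as the ``main obstacle'' are standard and are exactly what the paper uses.

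The genuine gap is in the primitive case, which you dispose of in one sentence. ``Each has a socle incompatible with containing $(\mathrm{A}_m)^2$'' is not an argument for the affine type: if $X \leqslant \AGL_d(p)$ with $p^d = m^2$, then $Y \cap \soc(X) = 1$ holds automatically (as $Y$ has no nontrivial abelian normal subgroup) and yields no contradiction, since a priori $Y$ could embed in a point stabiliser inside $\GL_d(p)$ with the right orbit structure. The paper rules this out with a fixed-point count: a two-point stabiliser in $\AGL_d(p)$ fixes at least $p$ collinear points whereas $Y_{\alpha\beta} \cong \mathrm{S}_{m-1}\times\mathrm{S}_{m-2}$ fixes exactly $2$, forcing $p=2$; then a three-point stabiliser must fix the $4$ points $0, v_\beta, v_\gamma, v_\beta+v_\gamma$ while $Y_{\alpha\beta\gamma}$ fixes only $3$. (An alternative: a $3$-cycle in one $\mathrm{A}_m$ factor fixes $m(m-3)$ points, which for $m\geqslant 5$ is never $0$ or a power of $p$, whereas fixed-point sets of affine maps are affine subspaces.) Likewise, ``product type only if it equals $M$'' requires the suborbit-length argument (the suborbits $1,\ m-1,\ m-1,\ (m-1)^2$ of $Y$ force $k=2$ in any overgroup $\mathrm{S}_\ell \wr \mathrm{S}_k$ in product action), and the diagonal type is excluded via the Liebeck--Saxl minimal degree bound $\mu(X) \geqslant n/3$ played against the $3$-cycle moving only $3m$ points. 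None of these steps is difficult, but each requires an actual argument that your proposal does not supply; as written, the affine and product cases in particular would not survive refereeing.
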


\begin{proof}
\par Suppose $Y \leqslant X \leqslant G$. Since $|M:Y|=2$, the group $Y$, and therefore $X$, acts transitively on $\{ 1, \dots, n \}$. As $Y$ is imprimitive, the projection of $Y$ onto $\mathrm{S}_2$ is trivial, and so $Y = \left( \mathrm{S}_m \times \mathrm{S}_m \right) \cap G$. Therefore $Y$ preserves two systems of imprimitivity $\{ \mathscr{B}^1_i \}$ and $\{ \mathscr{B}^2_i \}$ interchanged by $M \backslash Y$, each with $m$ blocks $\mathscr{B}^s_i$ of size $m$ for $1 \leqslant i \leqslant m$ and $1 \leqslant s \leqslant 2$, and so $Y$ is contained in two imprimitive maximal subgroups of the form $L = \left( \mathrm{S}_m \wr \mathrm{S}_m \right) \cap G$. Note that one factor of $\mathrm{S}_m \times \mathrm{S}_m$ occurs as a diagonal subgroup of the base group $\mathrm{S}_m^m$, and the other permutes the factors. Observe that the suborbits of $Y$ have lengths 1, $m-1$ (occurs twice) and $(m-1)^2$, and that the two suborbits of length $m-1$ correspond to the two systems of imprimitivity.
\par As $Y \cap \mathrm{S}_m^m$ is a diagonal subgroup, $Y$ is contained in $N$, and since $m \geqslant 5$ and $Y$ acts primitively on the $m$ direct factors of $\mathrm{S}_m^m$ it is elementary to show that $Y$ is in fact maximal in $N$. Also, $T=\mathrm{A}_m \wr \mathrm{S}_m \leqslant N \leqslant L$, and subgroups of $L$ containing $T$ correspond to subgroups of $\mathrm{S}_m^m / \mathrm{A}_m^m \cong \mathrm{C}_2^m$ normalised by $\mathrm{S}_m$, and these correspond to submodules of the permutation module $\mathbb{F}_2^m$ of $\mathrm{S}_m$. The only nonzero proper submodules of $\mathbb{F}_2^m$ are the constant module (corresponding to $N$) and the deleted permutation module (corresponding to $S$), and $N \leqslant S$ if and only if $m$ is even. Therefore $N \max L$ if $m$ is odd and $N \max S \max L$ if $m$ is even. Moreover, $Y < S$ if and only if $m$ is even. Finally $L \max G$ by \cite{LPS}, and so $Y$ is indeed contained in the two maximal chains stated. It remains to prove that, other than $M$ and almost simple primitive groups, there are no other overgroups of $Y$.
\par First consider the case where $X$ acts imprimitively on $\{ 1, \dots, n \}$, and assume that $X$ preserves a system of imprimitivity with blocks $\mathscr{C}_j$ for $1 \leqslant j \leqslant \ell$. Fix one of the two systems of imprimitivity $\{ \mathscr{B}_i \}=\{ \mathscr{B}^s_i \}$ described above. Arguing in an identical manner to the proof of Lemma \ref{lem:imprimitive}, we find that one of the following holds:
\begin{enumerate}
\item $|\mathscr{B}_i \cap \mathscr{C}_j| = 0$ or $m$ for all $1 \leqslant i \leqslant m$ and all $1 \leqslant j \leqslant \ell$;
\item $|\mathscr{B}_i \cap \mathscr{C}_j| = 0$ or 1 for all $1 \leqslant i \leqslant m$ and all $1 \leqslant j \leqslant \ell$.
\end{enumerate}
\noindent In case (1), since $Y$ acts primitively on the set of blocks $\{ \mathscr{B}_i \}$, for each $1 \leqslant i \leqslant m$ we have $\mathscr{B}_i=\mathscr{C}_j$ for some $j$, and so $X\leqslant L$. Now suppose (2) holds. Note that this implies $m \leqslant \ell$, since $\mathscr{B}_i = \bigcup_{1 \leqslant j \leqslant \ell} \left(\mathscr{B}_i \cap \mathscr{C}_j \right)$. Let $\alpha_1 \in \mathscr{B}_{i_1} \cap \mathscr{C}_j$ and $\alpha_2 \in \mathscr{B}_{i_2} \cap \mathscr{C}_j$. Recall that the suborbits of $Y$ have length 1, $m-1$ (occurs twice) and $(m-1)^2$. If $\alpha_2$ belongs to the suborbit of length $(m-1)^2$ then this implies $|\mathscr{C}_j| \geqslant (m-1)^2$, a contradiction since $(m-1)^2 > \frac{1}{2}m^2$ for $m \geqslant 5$. Therefore $\alpha_2$ belongs to one of the two suborbits of length $m-1$, and so $|\mathscr{C}_j| \geqslant m$. Since $m \leqslant \ell$ we have $m = \ell = |\mathscr{C}_j|$, and this implies that the system of imprimitivity $\{ \mathscr{C}_j\}$ is either equal to $\{ \mathscr{B}_i \}$ or the image of $\{\mathscr{B}_i\}$
 by an element of $M \backslash Y$. As we are assuming (2) the second case holds, and this completes the proof that the only systems of imprimitivity preserved by $Y$ are $\{ \mathscr{B}^1_i \}$ and $\{ \mathscr{B}^2_i \}$.
\par We now consider the case where $X$ acts primitively on $\{ 1, \dots, n \}$. Assume for the moment that $X \max G$, so that $X$ is of affine, diagonal, product or almost simple type. First suppose that $X$ is affine, so that $n=p^d$ for some prime $p$ and $X=\AGL_d(p) \cap G$. Let $\alpha \in \{ 1, \dots, n \}$ so that $\mathrm{S}_{m-1}^2 \cap Y \cong Y_\alpha \leqslant \GL_d(p)$. Let $\beta \in \{ 1, \dots, n \}$ be contained in an orbit of $Y_\alpha$ of size $m-1$. Then $Y_{\alpha \beta}$ has orbits of size 1 (occurs twice), $m-2, m-1$ (occurs twice) and $(m-1)(m-2)$. However, if $\beta$ corresponds to a non-zero vector $v_\beta \in V_d(p)$, the natural module of $\GL_d(p)$, then $\GL_d(p)_\beta$ has $p$ orbits of size 1, namely $\{ \lambda v_\beta \}$ for $\lambda \in \mathbb{F}_p$. This implies $p=2$. Now choose $\gamma$ from the same suborbit as $\beta$. Then $Y_{\alpha \beta \gamma}$ has orbits of length 1 (occurs thrice), $m-3, m-1$ (occurs thrice) and $(m-1)(m-3)$. However, if $\gamma$ corresponds to $v_\gamma \in V_d(p)$, $\GL_d(p)_{\beta \gamma}$ fixes the 4 points $0=v_\alpha, v_\beta, v_\gamma$ and $v_\beta+v_\gamma$, and this yields a contradiction. Hence $X$ is not of affine type.
\par Suppose now that $X$ is of diagonal type, so that $X=\left( T^k.\left( \Out(T) \times \mathrm{S}_k \right) \right) \cap G$ for some nonabelian simple group $T$ and $k \geqslant 2$ with $n=|T|^{k-1}$. Note that in this case we have $m > 9$, as otherwise this forces $n=|\mathrm{A}_5|=60$ which is not square. Observe that $Y \geqslant \mathrm{A}_m^2$, and so $Y$ contains a 3-cycle in one of the factors which permutes exactly $3m$ points. By \cite[Theorem~2]{LS}, the minimal degree of $X$ (that is, the smallest number of points moved by any nontrivial element of $X$) is $\mu(X) \geqslant \frac{1}{3}n$. However, this is a contradiction since $3m<\frac{1}{3}n$. Hence $X$ is not of diagonal type.
\par We now suppose that $X$ is of product type, with $X=\left( \mathrm{S}_\ell \wr \mathrm{S}_k \right) \cap G$ for $\ell \geqslant 5, k \geqslant 2$ and $n=\ell^k$. Now $X$ has suborbits of lengths 1 and $(\ell-1)^k$, and so the remaining suborbits have lengths at most $\ell^k-(\ell-1)^k -1$. Recall that $Y$ has suborbits of lengths $1, m-1$ (occurs twice) and $(m-1)^2$. This forces $k=2$, as otherwise $(m-1)^2 > 1, (\ell-1)^k$ and $\ell^k - (\ell-1)^k -1$. Hence $X$ is of the same type as $M$, and we have $X = N_{G}(Y)=M$. 
\par We have therefore shown that if $Y$ is contained in any primitive group $X$, then either $X$ is of almost simple type or $X=M$. This completes the proof.
\end{proof}

\begin{lem} \label{lem:product}
Let $\Gamma$ be as in Hypothesis \ref{hyp:An} and suppose further that $\Gamma$ is a 3-arc-transitive graph. Then $E$ is not of product type.
\end{lem}

\begin{proof}
\par Let $E$ be of product type, so that $E= \left( \mathrm{S}_m \wr \mathrm{S}_k \right) \cap G$ where $n=m^k$ for $m \geqslant 5$ and $k \geqslant 2$. Then $A \geqslant \mathrm{A}_m \wr \mathrm{A}_k$, and so either $A$ is primitive of product type or $k=2$. First suppose that $A$ is primitive. With the terminology of \cite{Pr1}, one of the following holds:
\begin{enumerate}
\item $H \leqslant \mathrm{S}_m \wr \mathrm{S}_k$  (a natural inclusion);
\item $m=8$ and $A \leqslant \PSL_2(7) \wr \mathrm{S}_k$ (an exceptional inclusion as listed in \cite[Table~1]{Pr1});
\item $m=8^\ell$ for some $\ell > 1$ and $A \leqslant (\PSL_2(7) \wr \mathrm{S}_\ell) \wr \mathrm{S}_k$ (a blow-up of an exceptional inclusion). 
\end{enumerate}
\noindent As $H$ is not contained in $E$ by Lemma \ref{lem:GvnotequalGe}, the first case does not hold. However, it is clear that the second and third cases also cannot hold as $\mathrm{A}_m \wr \mathrm{A}_k \leqslant A$, a contradiction. 
\par Therefore $A$ is not primitive, and so we must have that $k=2$ and $A$ projects trivially onto $\mathrm{S}_2$. Thus $A$ is as in Lemma \ref{lem:productprelim}. Since $H \not\leqslant E$ and $H$ is not almost simple by Lemma \ref{lem:GeGvalmostsimple} (as $|A|>|\mathrm{S}_6|$), $H$ is imprimitive and belongs to one of the two chains described in the lemma. However, the action of $H$ on the set of cosets of $A$ is not 2-transitive, a contradiction. Hence $E$ is not of product type. \end{proof}

\begin{lem} \label{lem:diagonal}
Let $\Gamma$ be as in Hypothesis \ref{hyp:An}. Then $E$ is not of diagonal type.
\end{lem}

\begin{proof}
\par Let $E$ be of diagonal type with $E= \left( T^k.(\operatorname{Out}(T) \times \mathrm{S}_k) \right) \cap G$ for a nonabelian simple group $T$ such that $k \geqslant 2$ and $n=|T|^{k-1}$. Then $A \geqslant T^k$, and the image of $A$ under the projection $E \rightarrow \mathrm{S}_k$ contains $\mathrm{A}_k$. Hence $A$ is also a primitive group of diagonal type. By \cite{Pr1}, $H$ is contained in $N_G(T^k)=E$, contradicting Lemma \ref{lem:GvnotequalGe}. Hence $E$ is not of diagonal type. \end{proof}

\par We now consider the case where $E$ is almost simple. We require the following lemma:

\begin{lem} \label{lem:a6a7maximal}
Let $G$ be an almost simple group with $\operatorname{soc}(G)=\mathrm{A}_n$ for $n \geqslant 5$, and suppose $G$ has subgroups $M$ and $N$ satisfying the following:
\begin{enumerate}
\item $M$ and $N$ are both almost simple, with $\operatorname{soc}(M)=\mathrm{A}_7$ and $\operatorname{soc}(N)=\mathrm{A}_6$;
\item $\operatorname{soc}(G) \not\leqslant M$;
\item $N \operatorname{max} G$;
\item $\soc(N) \leqslant M$.
\end{enumerate}
\noindent Then $G=\mathrm{A}_8, M \cong \mathrm{A}_7$ and $N \cong \mathrm{S}_6$.
\end{lem}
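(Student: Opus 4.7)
The plan is in two main steps.

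First, I will establish that $T_0 := \soc(M) \cong \mathrm{A}_7$ and $T_1 := \soc(N) \cong \mathrm{A}_6$ are both contained in $T := \soc(G) = \mathrm{A}_n$. Since $T \trianglelefteq G$, the subgroups $M \cap T$ and $N \cap T$ are normal in $M$ and $N$ respectively; neither can be trivial, since otherwise $M$ (respectively $N$) would embed into $\Out(T)$, which has order at most $4$, while $|M| \geqslant 2520$ and $|N| \geqslant 360$. Hence $M \cap T$ contains the unique minimal normal subgroup $T_0$ of $M$, and similarly $T_1 \leqslant N \cap T$. In particular $n \geqslant 7$, and $n = 7$ is excluded since then $T = T_0 \leqslant M$ contradicts~(2); so $n \geqslant 8$. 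Moreover $|N| \leqslant |\Aut(\mathrm{A}_6)| = 1440 < |M|$, so $M \not\leqslant N$, and the maximality of $N$ gives $G = \langle M, N \rangle$.

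Second, I analyse the action of $N$ on $\Omega := \{1, \ldots, n\}$ via the embedding $N \leqslant G \leqslant \mathrm{S}_n$. If $N$ fixes a point then by maximality $N$ equals the corresponding point stabiliser in $G$, giving $|N| = |G|/n \geqslant (n-1)!/2$ and thus $n \leqslant 7$, a contradiction. If $N$ is intransitive with no fixed point then $N$ is the setwise stabiliser of one of its orbits, of size $k$ with $2 \leqslant k \leqslant n - 2$, so $N = (\mathrm{S}_k \times \mathrm{S}_{n-k}) \cap G$. Inspecting the possibilities for the socle of this intersection (over both $G = \mathrm{A}_n$ and $G = \mathrm{S}_n$) shows that $N$ is almost simple with socle $\mathrm{A}_6$ precisely when $(k, n-k) = (6, 2)$ and $G = \mathrm{A}_8$, yielding $N \cong \mathrm{S}_6$; taking $M = \mathrm{A}_7$ to be the stabiliser in $\mathrm{A}_8$ of a point in the invariant $2$-subset then satisfies all four conditions, giving the stated conclusion.

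The main obstacle is ruling out the transitive case. The key observation is that, since $T_1 \leqslant M$ and every subgroup of $\mathrm{A}_7$ isomorphic to $\mathrm{A}_6$ is (up to conjugacy) a natural point stabiliser, the $T_1$-orbit sizes on any $\soc(M)$-orbit are explicitly computable from the $\soc(M)$-orbit size $s$: for $s = 1, 7, 15, 21, 35, \ldots$ the $T_1$-orbit sizes are $\{1\}, \{1,6\}, \{15\}, \{6,15\}, \{15,20\}, \ldots$ respectively. Since $N$-transitivity forces all $T_1$-orbits on $\Omega$ to share a common size, the only admissible $\soc(M)$-orbit size is $15$, so $15 \mid n$. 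Using that $N/T_1$ has order at most $4$ and permutes these $\soc(M)$-orbits transitively gives $n \in \{15, 30, 45, 60\}$; the cases $n \in \{30, 45, 60\}$ are excluded because $N$ would have to coincide with the setwise stabiliser of the block system in $G$ (as $G$ is primitive on $\Omega$), yet that stabiliser has order far exceeding $|N| \leqslant 1440$. The remaining case $n = 15$ is eliminated by inspecting the list of maximal subgroups of $\mathrm{A}_{15}$ and verifying that no almost simple maximal subgroup with socle $\mathrm{A}_6$ arises in a way compatible with the inclusion $T_1 \leqslant M$.
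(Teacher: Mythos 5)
Your overall strategy is reasonable and genuinely different from the paper's (the paper first forces $N$ to be primitive by comparing $|N|\leqslant 1440$ with the orders of the intransitive and imprimitive maximal subgroups of $\mathrm{S}_n$, then shows $M$ and $N$ would both have to be primitive of the common degree $15$, and derives a contradiction there). However, your treatment of the transitive case contains a genuine gap. The assertion that ``the only admissible $\soc(M)$-orbit size is $15$'' is false: your list of transitive $\mathrm{A}_7$-actions stops at degree $35$, but $\mathrm{A}_7$ also acts transitively with degrees $120$ and $360$, on the cosets of $7{:}3$ and of $C_7$ respectively. Since both of these point stabilisers are themselves transitive on $7$ points, one has $\mathrm{A}_6\cdot(7{:}3)=\mathrm{A}_6\cdot C_7=\mathrm{A}_7$, so the restriction of each of these actions to $T_1=\mathrm{A}_6$ is again \emph{transitive}. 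Hence a common $T_1$-orbit size of $120$ or $360$ is also admissible, and your argument as written does not exclude $n\in\{120,240,360,480\}$ or $n\in\{360,720,1080,1440\}$; more generally, the constancy of the $T_1$-orbit sizes has to be tested against \emph{all} transitive $\mathrm{A}_7$-actions, not just the five smallest degrees.

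These extra degrees can in fact be eliminated: for such $n$ the group $N$ is transitive with $|N|\leqslant|\PGammaL_2(9)|=1440$, so $N$ cannot be imprimitive (maximality would force $N$ to equal a stabiliser $(\mathrm{S}_a\wr\mathrm{S}_b)\cap G$ of far larger order), whence $N$ is primitive and a point stabiliser $N_\alpha$ of order $|N|/n\leqslant 12$ would have to be maximal in the almost simple group $N$, which is impossible. But your proof is incomplete at precisely the step where you deduce $15\mid n$ and $n\leqslant 60$, and this needs to be repaired before the reduction to $n\in\{15,30,45,60\}$ is valid. The remaining parts of your argument (the reduction to $n\geqslant 8$, the fixed-point and intransitive cases yielding $G=\mathrm{A}_8$, $N\cong\mathrm{S}_6$, and the disposal of $n=15,30,45,60$) are fine, modulo the appeal to the known subgroup structure of $\mathrm{A}_{15}$ for $n=15$, which matches what the paper does.
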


\begin{proof}
\par First suppose $n \geqslant 10$. We consider $G$ as a permutation group on $n$ letters in the natural way. If $N$ is an intransitive subgroup, then for some $m$ and $k$ such that $n=m+k$ we have $\left( \mathrm{S}_m \times \mathrm{S}_k \right) \cap \mathrm{A}_n \leqslant N$, a contradiction since $\frac{1}{2}|\mathrm{S}_m \times \mathrm{S}_k| > |\Aut(\mathrm{A}_6)|$ for $n \geqslant 10$. If $N$ acts imprimitively, then for some $m, k$ such that $n=mk$ we have $\left( \mathrm{S}_k^m \right) \cap \mathrm{A}_n \trianglelefteq N$, a contradiction. Therefore $N$ acts primitively, and so $\soc(N)$ is transitive. Suppose for the moment that $M$, and therefore $\soc(N)$, acts imprimitively. Using \textsc{Magma} \cite{magma}, we find that this is only possible if $N \ne \mathrm{S}_6$ and $n=36$ or 45; however, $M$ has no transitive action on $n$ points, a contradiction. Therefore $M$ also acts primitively. The only possible degree of a primitive action for both $M$ and $N$ is 15 (this can easily be seen using \textsc{Magma}). However, neither $\mathrm{S}_{15}$ nor $\mathrm{A}_{15}$ contains $\mathrm{S}_7$ or $\mathrm{A}_7$ as a maximal subgroup. 
\par Therefore $n<10$. Clearly $n \geqslant 8$, and the result follows using \textsc{Magma}. 
\end{proof}

\begin{cor} \label{cor:almostsimplealternatinggroups}
Let $\Gamma$ be as in Hypothesis \ref{hyp:An} and suppose further that $\Gamma$ is a 3-arc-transitive graph. Then $E$ is not an almost simple primitive group.
\end{cor}

\begin{proof}
Suppose $E$ is an almost simple group. Then by Corollary \ref{cor:almostsimple}, $\operatorname{soc}(E)=\mathrm{A}_6$ and $H$ is also an almost simple group with $\operatorname{soc}(H)=\mathrm{A}_7$. Therefore $G, E$ and $H$ satisfy the hypothesis of Lemma \ref{lem:a6a7maximal}. However, this implies $E=\mathrm{S}_6$, contradicting Corollary \ref{cor:almostsimple}.
\end{proof}

\subsection{Proof of Proposition \ref{prop:alternatinggroups}}
\begin{proof}[Proof of Proposition \ref{prop:alternatinggroups}]
\par If $n=6$ then the result follows from Lemma \ref{lem:a6}. We may therefore assume $n \ne 6$. We follow the method outlined at the end of Section \ref{sec:prelim} and classify lattices $L=(G,E,A,H)$. By the O'Nan Scott theorem, $E \operatorname{max} G$ is either intransitive, imprimitive or primitive, and if $E$ is primitive then $E$ is of affine, diagonal, product or almost simple type. Each of these types is considered in Lemmas \ref{lem:intransitive} to \ref{lem:diagonal} and Corollary \ref{cor:almostsimplealternatinggroups}, and the result follows as $\mathrm{K}_n$ is not 3-arc-transitive.
\end{proof}

\section{Sporadic groups} \label{sec:sporadicgroups}
\par In this section, we consider the case where $G$ is an almost simple sporadic group and complete the proof of Theorem \ref{thm:Anspor}. As usual we let $\Gamma=(V(\Gamma), E(\Gamma))$ be a nontrivial connected edge-primitive graph with $G=\operatorname{Aut}(\Gamma), e=\{u,v\} \in E(\Gamma)$, vertex-stabiliser $G_v$ and edge-stabiliser $G_e$ so that $G_e \operatorname{max} G$ and $G_{uv} = G_v \cap G_e$ is a subgroup of $G_e$ of index~2. 
\par The main result of this section is the following:

\begin{prop} \label{prop:sporadicgroups}
Let $\Gamma$ be a nontrivial connected edge-primitive 3-arc-transitive graph with $G=\operatorname{Aut}(\Gamma)$ an almost simple sporadic group. Then $\Gamma=\Cos(G,H,HgH)$ where $g \in E \backslash A$ and $(G,E,A,H)$ are listed in Table \ref{tab:sporadicgroups} below.
\end{prop}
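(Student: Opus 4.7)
The plan is to follow the recipe at the end of Section \ref{sec:prelim} and run through the almost simple sporadic groups $G$ one at a time, using the Atlas \cite{Atlas} (supplemented by MAGMA) to enumerate the maximal subgroups $E$ of $G$. For each $E$ we check whether $E$ has a subgroup $A$ of index two (equivalently, whether $E$ has a quotient of order two), and for each such $A$ we list the core-free overgroups $H$ of $A$ in $G$ with $A<H$ and $\soc(G)\not\leq H$. By Lemmas \ref{prop:cosetgraph} and \ref{lem:isomorphicgraphs}, any nontrivial edge-primitive arc-transitive graph with $\Aut(\Gamma)=G$ corresponds, up to $\Aut(G)$-conjugation, to such a quadruple $(G,E,A,H)$.

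The next step is to prune the resulting finite list using the general constraints established in Section \ref{sec:prelim}. By Corollary \ref{cor:almostsimple}, whenever $E$ or $H$ is almost simple we must have $\soc(E)=\mathrm{A}_6$ with $E\neq \mathrm{A}_6,\mathrm{S}_6$, and $\soc(H)=\mathrm{A}_7$ with $|H:A|=7$; this immediately eliminates most of the almost simple candidates coming off the Atlas lists. For quadruples in which both $E$ and $H$ are \emph{not} almost simple, Lemma \ref{lem:GeGvalmostsimple} together with Lemma \ref{lem:cycliccentralizer} forces $E$ to contain a sufficiently large non-abelian layer and restricts the structure of $C_{E}(N)$, which rules out many of the small maximal subgroups $E$. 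Together with the trivial but useful inequality $|H|>|E|$ from Lemma \ref{lem:GvnotequalGe}, this cuts the list for each sporadic $G$ down to a handful of candidates.

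For each surviving quadruple I would construct the coset graph $\Gamma=\Cos(G,H,HgH)$ in MAGMA and subject it to three tests: (i) $2$-arc-transitivity, which by \cite[Lemma~9.4]{PLN} is equivalent to $H$ acting $2$-transitively on the right cosets of $A$, and which discards the majority of the remaining candidates; (ii) $3$-arc-transitivity, which when the local action is faithful is read off from Lemma \ref{lem:lufaithful}, and when the local action is unfaithful is verified by computing orbits of a $2$-arc stabiliser on the neighbourhood of its third vertex; and (iii) the check that $\Aut(\Gamma)=G$ rather than some proper overgroup, which is done via the tables of containments of primitive groups in \cite{LPS}. The genuinely $3$-arc-transitive cases that survive all three tests are the entries of Table \ref{tab:sporadicgroups}, and the candidates which pass test (i) but fail test (ii) include the graph for $G=\mathrm{J}_1$ mentioned in the introduction, which is recorded as a new edge-primitive $2$-arc-transitive example.

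The main obstacle is computational rather than conceptual: for the larger sporadic groups (such as $\mathrm{O'N}$, $\mathrm{Ru}$, and the Fischer and Conway groups and their automorphism groups) the subgroup lattice is not laid out explicitly in the Atlas, so the enumeration of the overgroups $H$ of a given index-two subgroup $A$, and the determination of which $\Aut(G)$-conjugacy class of $H$ each belongs to, must be carried out inside MAGMA using standard generators; care is needed to make sure no containment is missed. A related delicate point is confirming $\Aut(\Gamma)=G$ in the borderline cases (for instance $\mathrm{J}_3.2$ and $\mathrm{Ru}$), where one must verify from \cite{LPS} that no larger almost simple group acts on the same vertex set with compatible suborbits.
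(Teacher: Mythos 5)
Your overall strategy (enumerate maximal subgroups $E$, find index-two subgroups $A$, locate overgroups $H$, prune with Corollary \ref{cor:almostsimple} and the $2$-transitivity of $H$ on the cosets of $A$, then verify the survivors) is the same skeleton as the paper's proof, but two genuine gaps remain. First, you cannot ``enumerate the maximal subgroups $E$ of $G$'' from the Atlas when $G=\mathrm{M}$ is the Monster: the classification of its maximal subgroups was incomplete, so your enumeration is not known to be exhaustive. The paper closes this by observing (via \cite{Wi1}) that any unknown maximal subgroup is almost simple with socle $\PSL_2(13)$ or $\PSL_2(16)$, hence excluded as an edge-stabiliser by Corollary \ref{cor:almostsimple}, and excluded as a container of $H$ because $|A|=|E|/2$ does not divide the order of any almost simple group with such a socle. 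Without some such argument your case analysis is incomplete for the Monster.

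Second, and more importantly, the step you flag as merely ``computational'' is in fact where the proof would fail as you describe it: for the large sporadic groups one cannot compute in MAGMA the full set of overgroups of a given index-two subgroup $A$ (indeed, for $\mathrm{Co}_1$ generators of all maximal subgroups are not even available, and for the Baby Monster and Monster no workable permutation representation exists for such lattice computations), nor can one build the coset graph $\Cos(G,H,HgH)$ and test $2$- or $3$-arc-transitivity directly. The paper's actual proof replaces this with purely arithmetic and structural filters that need only Atlas-level data: any $H$ must lie in some $M\max G$ or $M\max \frac{1}{2}G$ with $|E|/2$ dividing $|M|$; for each candidate pair one tests whether a simple section $X=R/Q$ of $M$ can contain $(A\cap R)/(A\cap Q)$, which kills almost all pairs; and for the survivors one uses the list of finite $2$-transitive groups together with the divisibility $d(d-1)\mid |H|$ for $d=|H:A|$ to rule out $2$-arc-transitivity. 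This reduction is the substantive content of the proof and is absent from your proposal. (A smaller point: Lemma \ref{lem:GeGvalmostsimple} does not ``force $E$ to contain a large non-abelian layer''; it only says that when the local action is unfaithful neither $G_v$ nor $G_e$ is almost simple, and Lemma \ref{lem:cycliccentralizer} is used to prove that statement rather than as an independent pruning tool here.)
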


\begin{rem}
In Table \ref{tab:sporadicgroups} we record the group $S \leqslant G$ where $H \max S$ and either $S= G$ or $|G:S|=2$. We note that the first three graphs in Table \ref{tab:sporadicgroups} are in fact 4-arc-transitive and listed in \cite[Table~2]{Li2011}. The fourth graph is \cite[Example~4.2]{Lu2018}.
\end{rem}

\begin{table}[!h]
\caption{Edge-primitive 3-arc-transitive graphs with $\Gamma$ and $G$ as in Proposition \ref{prop:sporadicgroups}}
\label{tab:sporadicgroups}
\centering
\begin{tabular}{c c c c c c c} 
$G$ & $E$ & $A$ & $H$ & $S$ & Notes \\ \hline \\[-2.5ex]
$\mathrm{M}_{12}.2$ &  $3^{1+2}_+:\mathrm{D}_8$ & $3^{1+2}_+:2^2$ & $3^2:2\mathrm{S}_4$ & $\mathrm{M}_{12}$ & Weiss \cite{Weiss85} \\
$\mathrm{J}_3.2$ &  $[2^6]:(\mathrm{S}_3)^2$ & $[2^6]:\left( (\mathrm{S}_3)^2 \cap \mathrm{A}_6 \right)$& $[2^4]:(3 \times \mathrm{A}_5).2$ & $G$ & Weiss \cite{Weiss86}\\
$\mathrm{Ru}$ & $5^{1+2}_+:[2^5]$ & $5^{1+2}_+:[2^4]$ &  $5^2:\GL_2(5)$ & $G$ & Ru graph \cite{StrWe} \\
$\mathrm{O'N}.2$ & $\PGL_2(9)$ & $\mathrm{A}_6$ & $\mathrm{A}_7$  & $\mathrm{O'N}$ & Lu \cite{Lu2018}  \\
 \hline
\end{tabular}
\end{table}

\begin{proof}[Proof of Proposition \ref{prop:sporadicgroups}]
\par We follow the method outlined at the end of Section \ref{sec:prelim} and classify lattices $L=(G,E,A,H)$ up to $\Aut(G)$-conjugation. Let $G$ be an almost simple sporadic group with socle $T$, and suppose $G$ contains subgroups $E, A$ and $H$ as in Lemma \ref{prop:cosetgraph}. Information on the maximal subgroups of $G$ can almost always be found in the Atlas \cite{Atlas}, and a more complete list can be found in the survey article \cite{Wi1}. All maximal subgroups of $G$ are known except if $G=\mathrm{M}$ is the Monster group; however, by \cite{Wi1} the remaining possibilities for maximal subgroups $E$ of $G$ are all almost simple with $\soc(E) = \PSL_2(13)$ or $\PSL_2(16)$, and so by Corollary \ref{cor:almostsimple} these cannot be edge-stabilisers of an edge-primitive 3-arc-transitive graph. Moreover, the order of a subgroup of index 2 of a maximal subgroup $E \max \mathrm{M}$ does not divide the order of an almost simple group with one of these socles, and so the vertex-stabiliser $H$ of an edge-primitive graph with automorphism group $\mathrm{M}$ and edge-stabiliser $E$ cannot be contained in any of these unknown maximal subgroups.
\par We now outline our method for finding lattices $L$.
\begin{enumerate}
\item Using \cite{Wi1}, for each $\Aut(T)$-class representative $E$ of maximal subgroups of $G$ we list the pairs of subgroups $(E,M)$ of $G$ such that $M \max G$ or $M \max \frac{1}{2}G$ and $|M|$ is divisible by $|E|/2$. (It is perhaps interesting to note that all almost simple sporadic groups contain at least one such pair). We suppose that $H \leqslant M$.
\item If $E$ does not contain a subgroup of index 2 then $E$ does not occur in a lattice as above, and we remove the pairs containing $E$.
\item If $E$ is almost simple, then we must have $E$ and $H$ as in Corollary \ref{cor:almostsimple} with $H \leqslant M$ for some $M$. If this is the case, and $E$ contains a subgroup of index 2 contained in $H$, then $\Gamma$ is indeed 3-arc-transitive by Lemma \ref{lem:lufaithful} and we list $L=(G,E,A,H)$ in Table \ref{tab:sporadicgroups} (this yields the fourth row of Table \ref{tab:sporadicgroups}). Otherwise this does not yield a lattice, and we remove $(E,M)$. 
\item For each $(E, M)$ we consider the simple sections of $M$. Suppose $Q \trianglelefteq R \leqslant M$ with $X = R/Q$ a simple group. For each subgroup $A$ of $E$ of index 2, if $A \leqslant M$ then we have $\left( A \cap R \right)/\left( A \cap Q \right) \cong \left(A \cap R \right)Q/Q \leqslant X$. Using \textsc{Magma}, \cite{Atlas} and \cite{Wi1} (and possibly \cite{NW} and \cite{Wi2}), in most cases we find that there exists a simple section $X$ of $M$ such that $X$ contains no subgroup of the form $\left( A \cap R \right)/\left( A \cap Q \right)$ for all subgroups $A$ of $E$ of index 2. If this is the case, we remove the pair $(E,M)$ from our list.
\item For each remaining pair $(E, M)$ we list the pairs of subgroups $(E,H)$ where $H \leqslant M$ is a subgroup that possibly contains $A$. We repeat steps (3) and (4) for $H$: if $H$ is almost simple then we must have $E$ and $H$ as in Corollary \ref{cor:almostsimple}, and we also consider the simple sections of $H$. If $(E,H)$ fails either of these steps then we remove $(E,H)$. 
\item \sloppy We now suppose that each remaining pair $(E,H)$ gives rise to a lattice $L=(G, E, A, H)$ as above. We consider the action of $H$ on the set of right cosets of $A$. If $\Gamma$ is 2-arc-transitive then this action is 2-transitive. Therefore, using \cite[Tables~7.3, 7.4]{Ca}, if it is not possible for $H$ to have a 2-transitive action on the set of right cosets of $A$ then we remove the pair $(E,H)$. It is useful to note that if $d=|H:A|$ and $H$ acts 2-transitively on the set of right cosets of $A$, then $d(d-1)$ divides $|H|$. This fact can often be used to rule out certain pairs $(E,H)$. 
\item In a number of remaining cases $(E,H)$ the valency equals 4, and we check the classification of tetravalent edge-primitive 2-arc-transitive graphs given in \cite{Guo2014b} to see if such a lattice exists.
\item This leaves the lattices listed in \cite[Table~2]{Li2011} (which we add to our table) and one other possible lattice $L=(\mathrm{J}_1,7:6,7:3,2^3:7:3)$. Using \textsc{Magma}, we find that $L$ does indeed exist and gives rise to an edge-primitive 2-arc-transitive graph $\Gamma$. However, in this case $\Gamma$ is not 3-arc-transitive, as the valency is $|H:A|=8$ and the stabiliser of a 2-arc has order 3. This completes Table \ref{tab:sporadicgroups}.
\end{enumerate}
\par We note that if $G$ is sufficiently small then all lattices $L$ of the above form can be completely determined using \textsc{Magma}, and in general this is much faster than using the method outlined above.

\par As an example, we show that $G=\mathrm{Co_1}$ is not the automorphism group of an edge-primitive 3-arc-transitive graph using the method above (note that generators of all maximal subgroups of $G$ are not known, and so in this case we cannot simply use \textsc{Magma} to determine all lattices). By \cite[p. 68]{Wi1}, there are 22 classes of maximal subgroups of $G$. For ease of notation, we refer to a class of maximal subgroups, or a representative of a class, by its number using the ordering given in \cite[p. 68]{Wi1}.
\begin{enumerate}
\item  The orders of the maximal subgroups are listed in \cite{Atlas} (note the corrections given in \cite{Wi1}). We find that there are 33 pairs $(E,M)$ where $E \max G, M \max G$ and $|E|/2$ divides $|M|$. 
\item Without knowing further information on the structure of the maximal subgroups, we cannot rule out any of our pairs by (2).
\item We cannot rule out any pairs by (3), as $E$ is not equal to group number 1, 4 or 6.
\item For each $(E,M)$ we consider the simple sections of $M$. For example, consider the pair (7,2). Here $E=(\mathrm{A_4} \times \mathrm{G}_2(4)):2$ and $M=3.\mathrm{Suz}:2$, and so $A=\mathrm{A_4} \times \mathrm{G}_2(4)$. Let $Q=3 \trianglelefteq R=3.\mathrm{Suz} \leqslant M$ and $X=R/Q \cong \mathrm{Suz}$. Then $A \cap R=A$, and since $A$ has no normal subgroup of order 3 we have $A \cap Q=1$ and so $|\left( A \cap R \right) : \left( A \cap Q \right)|=2^{14}.3^4.5^2.7.13$. However, this yields a contradiction using \cite{Atlas}, as no maximal subgroup of $X$ has order divisible by $2^{14}.3^4.5^2.7.13$. We therefore remove the pair (7,2). Using similar reasoning, we remove all but one pair.
\item We are left to consider the pair (18,12), where $E \cong \left( \mathrm{D_{10}} \times \left( \mathrm{A_5} \times \mathrm{A_5} \right).2 \right).2$ and $M \cong \left( \mathrm{A_5} \times \mathrm{J}_2 \right):2$. Suppose $H \leqslant M$. Let $Q=\mathrm{A}_5$ and $R=\mathrm{A}_5 \times \mathrm{J}_2 \leqslant M$ so that $Q \trianglelefteq R$ and $X=R/Q=\mathrm{J}_2$. Then $Y=\left( A \cap R \right) / \left(A \cap Q \right)$ satisfies $2^3.3.5^2 \leqslant |Y| \leqslant |A|$. From \cite{Atlas} we must have $|Y|=2^3.3.5^2$ and $Y \max X$, and so either $\left( H \cap R \right) / \left(H \cap Q \right) = Y$ or $\left( H \cap R \right) / \left(H \cap Q \right) = X$. The first case implies $H=A$, a contradiction. The second case implies that $H=M$, and so this is the only possibility for the case (18,12). 

\item \sloppy We now suppose $(E,H)$ gives rise to a lattice $(G, E, A, H)$ for some $E \cong \left( \mathrm{D_{10}} \times \left( \mathrm{A_5} \times \mathrm{A_5} \right).2 \right).2$ and $H \cong \left( \mathrm{A_5} \times \mathrm{J}_2 \right):2$ and some subgroup $A < E$ of index 2. Then $d=|H:A|=2^4.3^2.7$. Observe that $|H|$ is not divisible by $d(d-1)$, and so the action of $H$ on the set of right cosets of $A$ is not 2-transitive. Therefore this lattice (if it exists) does not yield a 2-arc-transitive graph. This removes the final pair, and completes the proof for $G=\mathrm{Co}_1$.
\end{enumerate} 
The remaining cases are proved in a similar manner.
\end{proof}

\begin{proof}[Proof of Theorem \ref{thm:Anspor}]
This follows immediately from Propositions \ref{prop:alternatinggroups} and \ref{prop:sporadicgroups}.
\end{proof}

\section{Classical groups: faithful local action} \label{sec:classicalgroupsfaithful}

\par In this section we prove Theorem \ref{thm:faithful}. Throughout we assume that $G = \Aut(\Gamma)$ is a finite almost simple classical group such that $G_v$ acts faithfully on $\Gamma(v)$. We begin by fixing some notation. Let $T=\soc(G)$ be a finite simple classical group with natural module $V$ of dimension $n$ over $\mathbb{F}_{q^\delta}$ of characteristic $p$, where $\delta=2$ if $T$ is unitary and $\delta=1$ otherwise. For the notation of $T$ and $G$ we follow \cite[\S~2]{KlLi}: in particular, we let $\GammaL_n(q^\delta)$ be the semilinear group and $\PGammaL_n(q^\delta)$ its projective version. To refer to outer automorphisms of $T$ we use the conventions of \cite[\S~1.7.1]{BHRD}: we denote by $\delta$ a diagonal automorphism, $\phi$ a field automorphism and $\gamma$ a graph automorphism (in the sense of the Dynkin diagram). 
\par Aschbacher's theorem \cite{As} states that if $X \leqslant G$ and $G$ does not contain an exceptional outer automorphism (in the case where $T=\PSp_4(q)$ and $q$ is even or $T=\POmega_8^+(q)$) then either $X$ is contained in a member of an Aschbacher class $\mathscr{C}_i$ for some $1 \leqslant i \leqslant 8$ or $X \in \mathscr{C}_9$. Details of the structure of the Aschbacher classes can be found in \cite[\S~4]{KlLi}. In particular, we note that if $X \in \mathscr{C}_9$ then $X$ is an almost simple group such that $\soc(X)$ acts absolutely irreducibly on the natural module $V$ of $G$, and $\soc(X)$ is not contained in a member of $\mathscr{C}_i$ for $i=1,3,5$ or 8. This will be useful in the following lemma.

\begin{lem} \label{lem:a6a7maximalclassical}
Let $G$ be a finite almost simple classical group with subgroups $M$ and $N$ satisfying the following:
\begin{enumerate}
\item $M$ and $N$ are almost simple, with $\soc(M)=\mathrm{A_7}$ and $\soc(N)=\mathrm{A_6}$;
\item $N \max G$;
\item $\soc(G) \not\leqslant M$;
\item $\soc(N) \leqslant M$.
\end{enumerate}
\noindent Then $\soc(G)$ is isomorphic to $\PSU_3(5), \PSL_4(2), \PSL_6^\epsilon(q)$ or $\POmega_{10}^-(7)$.
\end{lem}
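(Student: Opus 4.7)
The plan is to exploit the chain $\mathrm{A}_6 \leqslant \mathrm{A}_7 \leqslant M \leqslant G$ together with Aschbacher's theorem to pin down $\soc(G)$. First I would verify that $\soc(N) \leqslant \soc(M)$: since $M/\soc(M)$ is a subgroup of $\Out(\mathrm{A}_7) \cong C_2$ and hence solvable, while $\soc(N) \cong \mathrm{A}_6$ is perfect, the image of $\soc(N)$ in $M/\soc(M)$ is trivial, forcing $\soc(N) \leqslant \mathrm{A}_7$. Thus $G$ contains an $\mathrm{A}_7$ whose natural $\mathrm{A}_6$ subgroup extends to the maximal subgroup $N$ of $G$. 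Since $\soc(G) \not\leqslant M$, $M$ is a proper subgroup of $G$, so I can fix a maximal overgroup $M^* \max G$ with $M \leqslant M^*$, to which Aschbacher's theorem will be applied.

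Next I would classify $M^*$ according to its Aschbacher class. If $M^* \in \mathscr{C}_9$, then $M^*$ is almost simple and, as $\mathrm{A}_7$ is perfect and $M^*/\soc(M^*)$ is solvable, its socle $S^*$ contains $\mathrm{A}_7$. Consulting the tables of $\mathscr{C}_9$-subgroups of classical groups in the relevant dimensions (\cite{BHRD} for dimension up to $12$, supplemented by results of Hiss--Malle and Lübeck for larger dimensions), the possibilities for $(G,S^*)$ are very limited. In each case one must also verify the additional constraint that there is a maximal subgroup $N$ of $G$ with $\soc(N) = \mathrm{A}_6$ and $\soc(N) \leqslant M$, which further prunes the list.

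Third, I would eliminate the geometric Aschbacher classes $\mathscr{C}_1$--$\mathscr{C}_8$. From the ordinary, $2 \cdot \mathrm{A}_7$ and $3 \cdot \mathrm{A}_7$ character tables, the dimensions of absolutely irreducible faithful projective $\mathbb{F}_{q^\delta} \mathrm{A}_7$-representations lie in the short list $\{3,4,6,10,14,15,20,21,24,35,36\}$, which severely restricts $\dim V$. For $\mathscr{C}_1$ (reducible), the composition factors of $V|_{\mathrm{A}_7}$ are drawn from this list and I would reduce to a smaller dimensional problem by analysing each combination. For $\mathscr{C}_2$ and $\mathscr{C}_7$, the induced action of $\mathrm{A}_7$ on the direct summands (or tensor factors) is a transitive permutation action whose degree divides a proper index in $\mathrm{A}_7$, forcing degree $7$ and yielding the permutation module, which is already covered by $\mathscr{C}_1$. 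The $\mathscr{C}_4$ tensor decomposition is incompatible with the possible irreducible dimensions of $\mathrm{A}_7$. Classes $\mathscr{C}_3$ and $\mathscr{C}_5$ allow passage to a smaller field of definition, reducing to the $\mathscr{C}_9$-analysis, while $\mathscr{C}_6$ and $\mathscr{C}_8$ can be excluded by direct computation from their structure, as the possible dimensions and group-theoretic constraints are too restrictive to accommodate $\mathrm{A}_7$ together with the required $\mathrm{A}_6$-maximal subgroup.

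The surviving possibilities correspond exactly to the four listed families: $\PSU_3(5)$, arising from the $3$-dimensional unitary representation of $3 \cdot \mathrm{A}_7$ over $\mathbb{F}_{25}$; $\PSL_4(2) \cong \mathrm{A}_8$, containing $\mathrm{A}_7$ naturally; $\PSL_6^\epsilon(q)$, from the $6$-dimensional irreducible representation of $\mathrm{A}_7$ with type depending on $q$; and $\POmega_{10}^-(7)$, from a $10$-dimensional orthogonal-of-minus-type representation of $\mathrm{A}_7$ in characteristic $7$ (consistent with the $\PSU_4(3).(2^2)$ entry in Table~\ref{tab:classicalgroupsfaithful}). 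The principal obstacle will be the exhaustive $\mathscr{C}_9$-enumeration of almost simple subgroups whose socle contains $\mathrm{A}_7$, together with verifying for each candidate $G$ that a suitable maximal subgroup $N$ with $\soc(N) = \mathrm{A}_6$ and $\soc(N) \leqslant M$ genuinely exists; this is where the bulk of the casework resides.
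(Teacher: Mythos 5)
Your strategy centres the Aschbacher analysis on $M$ (via a maximal overgroup $M^*\geqslant M$), but $M$ is not assumed maximal, and this choice leaves the decisive hypothesis --- that $N$ is maximal in $G$ --- essentially unused until a final ``pruning'' step. That is where the argument breaks down. Without exploiting $N\max G$ early, you cannot bound $n=\dim V$: in your $\mathscr{C}_1$ case the composition factors of $V|_{\mathrm{A}_7}$ may include trivial modules with arbitrary multiplicity, so $n$ is unbounded and the proposed ``reduce to a smaller dimensional problem'' recursion does not preserve the hypotheses (the stabiliser of a subspace is not an almost simple classical group with a maximal $N$); likewise the $\mathscr{C}_3$/$\mathscr{C}_5$ ``pass to another field'' reductions change $G$ and again lose the maximality of $N$. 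Consequently the $\mathscr{C}_9$ step becomes an enumeration of almost simple subgroups with socle containing $\mathrm{A}_7$ in classical groups of \emph{unbounded} dimension, for which no complete classification exists (\cite{BHRD} and the Hiss--Malle/L\"ubeck data only cover low dimensions). There is also a local error: in your $\mathscr{C}_2$/$\mathscr{C}_7$ analysis you claim the transitive action of $\mathrm{A}_7$ on the summands forces degree $7$, but $\mathrm{A}_7$ has faithful transitive actions of degrees $15$, $21$, $35$, \dots, so this does not follow.

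The paper's route is to apply Aschbacher's theorem to $N$ itself. For $n\geqslant 10$ one shows $N\notin\mathscr{C}_1,\dots,\mathscr{C}_8$ because the structure of geometric maximal subgroups (from Kleidman--Liebeck) forces $N\cap\soc(G)$ to contain a classical group of dimension at least $5$, or to be local --- both incompatible with $N$ almost simple of socle $\mathrm{A}_6$, since $|N|\leqslant|\Aut(\mathrm{A}_6)|$ is tiny. Hence $N\in\mathscr{C}_9$, so a perfect central extension of $\mathrm{A}_6$ acts absolutely irreducibly on $V$; comparing the irreducible degrees of covers of $\mathrm{A}_6$ and of $\mathrm{A}_7$ (which must also act irreducibly, as it contains $\soc(N)$) bounds $n$ to $10$ or $15$ when $n\geqslant 10$, and the cases $n\leqslant 9$ are read off from the tables of \cite{BHRD}. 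Your representation-theoretic ingredients (degrees of covers of $\mathrm{A}_7$, the BHRD tables, the final list of four socles) are the right ones, but they only become usable after the dimension has been bounded via the maximality of $N$; as written, your proof does not close.
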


\begin{proof}
\par Let $T=\soc(G)$ be as above. Also let $X=\soc(M)=\mathrm{A_7}$ and $Y=\soc(N)=\mathrm{A_6}$. For a subgroup $S \leqslant G \leqslant \PGammaL_n(q^\delta)$, denote by $\hat{S} \leqslant \GammaL_n(q^\delta)$ the full preimage of $S$. Conversely, for a subgroup $S \leqslant \GammaL_n(q^\delta)$ let $\overline{S}=S/\left(S \cap Z(\GammaL_n(q^\delta)) \right) \leqslant \PGammaL_n(q^\delta)$. Observe that $N/(N \cap T) \leqslant G/T$ which is soluble by the Schreier conjecture, and so $T \geqslant Y$. Similarly $T \geqslant X$. 
\par First suppose $n \leqslant 9$. The maximal subgroups of $G$ are listed in \cite[Tables~8.1--8.59]{BHRD}. In the case where Aschbacher's theorem applies, we first prove that
\begin{align} \label{eqn:YinS}
\text{if $N \in \mathscr{C}_9$ then $M \in \mathscr{C}_9$.}
\end{align}
\noindent Suppose for a contradiction that $N\in\mathscr{C}_9$ and $M \notin \mathscr{C}_9$. From the definition of $\mathscr{C}_9$ and the fact $\soc(N) \leqslant M$, we see that $M$ is not contained in a member of $\mathscr{C}_i$ for $i=1,3,5$ or 8. If $M$ is contained in a member of $\mathscr{C}_2$ then $M$ preserves a subspace decomposition $\mathscr{D}$ of the form $V=U_1 \oplus \dots \oplus U_t$ where $\dim(U_i)=m$ and $n=mt$. As $Y$ acts absolutely irreducibly on $V$, $Y$ is not contained in the subgroup $M_{(\mathscr{D})} \trianglelefteq M$ fixing each component $U_i$ for $1 \leqslant i \leqslant t$, and so there exists a homomorphism from $Y$ into $M/M_{(\mathscr{D})} \leqslant \mathrm{S}_t$ such that the image of $Y$ acts transitively on the $t$ components. As $Y$ has no transitive action on $7,8$ or $9$ points we must have $n=t=6$. But this yields a contradiction, as the only insoluble composition factor of a group in $\mathscr{C}_2$ in this case is $\mathrm{A}_6$. 
\par Next suppose $M$ is contained in a member of $\mathscr{C}_4$ or $\mathscr{C}_7$, so that $M$ preserves a decomposition $\mathscr{D}$ of the form $V=U_1 \otimes \dots \otimes U_t$ where $\dim(U_i)=n_i$ for $1 \leqslant i \leqslant t$ and $n=n_1n_2 \dots n_t$. The insoluble composition factors of groups in these classes are classical groups with natural modules of dimension at most 3. It follows from \cite[Proposition~5.3.7]{KlLi} that $t=2$, and from the structure of $M$ we have $X$ and $Y$ contained in the direct product of two classical groups with natural modules of dimensions $(n_1, n_2)=(2,4)$ or $(3,3)$. In the first instance the projection of $X$ onto the first factor must be trivial, and so $X$ is contained in the second factor. But this implies $Y \leqslant X$ is reducible, a contradiction. In the second case we have $n=9$, and there exists no maximal subgroup $N \in \mathscr{C}_9$.
\par Finally consider the case where $M$ is contained in a member of $\mathscr{C}_6$, the normaliser of an extraspecial $r$-group for a prime $r$. Inspection of the tables in \cite{BHRD} shows that the only insoluble composition factors of such groups containing $Y$ are $\Sp_6(2)$ and $\mathrm{A}_8$ in the cases $T=\PSL^\epsilon_8(q)$ and $\POmega_8^+(q)$ respectively. However, neither of these cases has a maximal subgroup $N \in \mathscr{C}_9$. This competes the proof of (\ref{eqn:YinS}).

\par Using (\ref{eqn:YinS}), we use the tables in \cite{BHRD} and find that $T$ is listed above, taking care to inspect the tables for exceptional isomorphisms involving almost simple groups with socle $\mathrm{A_6}$, such as 
\begin{align*}
\mathrm{A_6} \cong \PSL_2(9) \cong \PSp_2(9) \cong \PSU_2(9) \cong \Omega_3(9) \cong \Sp_4(2)' \cong \Omega_4^-(3).
\end{align*}
\noindent If there exists a maximal subgroup $N$ with $N \notin \mathscr{C}_9$ (such as in the case where $T=\PSU_4(3)$ and $N \in \mathscr{C}_5$) then \textsc{Magma} is often useful for showing that $Y$ is not contained in an almost simple group with socle $\mathrm{A}_7$.

\par Now suppose $n \geqslant 10$. Here Aschbacher's theorem applies and so $N$ lies in one of $\mathscr{C}_1, \dots, \mathscr{C}_9$. Either $T=\PSL_n(q)$ and $G$ contains a graph automorphism or $G \leqslant \PGammaL_n(q)$, and so $G$ acts on the set of subspaces of $V$.
\par We first prove that
\begin{align} \label{eqn:MNinC9}
N \in \mathscr{C}_9.
\end{align}
\par We begin by noting that if $S$ is a finite simple classical group with natural module of dimension at least 5, then 
\begin{align} \label{eqn:MNorder}
|N| < |S|.
\end{align}
\par First suppose $N \in \mathscr{C}_1$. If $N$ stabilises a decomposition $V=U \oplus W$ with $\dim(U)=m, \dim(W)=n-m$ and $m \geqslant \frac{n}{2} \geqslant 5$ then by \cite[Lemma~4.1.1]{KlLi} we have $\overline{ \left( \Omega(U) \times \Omega(W) \right)} \leqslant  N \cap T $, contradicting (\ref{eqn:MNorder}). Therefore either $N$ stabilises a totally singular $m$-space, or $T=\PSL_n(q)$ and $N$ stabilises a pair of subspaces $\{U ,W \}$, or $T=\POmega^\pm_n(q), q$ is even and $N$ stabilises a nonsingular 1-space. In the first two cases, \cite[Propositions~4.1.17--4.1.22]{KlLi} imply that $N \cap T$ is $p$-local, a contradiction. In the remaining case we have $\soc(N \cap T) \cong \Sp_{n-2}(q)$ by \cite[Proposition~4.1.7]{KlLi}, again a contradiction. Hence $N \notin \mathscr{C}_1$. 
\par Next suppose that $N \in \mathscr{C}_2$, so that $N$ stabilises a decomposition $V=U_1 \oplus \dots \oplus U_t$ where $t \geqslant 2, \dim(U_i)=m$ for $1 \leqslant i \leqslant t$ and $n=mt$. By \cite[Propositions~4.2.4--4.2.7, 4.2.9--4.2.11, 4.2.14--4.2.16]{KlLi}, either $N \cap T$ has a normal subgroup of index $t!$ or $N$ is of type $O_\frac{n}{2}(q)^2$, and so we must have $t=2$. The same results also imply that $\overline{\Omega(U_1)} \leqslant N \cap T$, contradicting (\ref{eqn:MNorder}) since $m=\frac{n}{2} \geqslant 5$. Hence $N \notin \mathscr{C}_2$.
\par We now consider $N \in \mathscr{C}_3$, so that $N$ is of the form $\mathrm{Cl}_m(q^t)$ for some classical group and prime $t$ such that $n=mt$. By \cite[Propositions~4.3.i, i=6,7,10,14,16,17]{KlLi}, $N \cap T$ contains a normal subgroup of index $t$, and so $t=2$. However, the same results imply that $N \cap T$ contains a classical group $\mathrm{Cl}_\frac{n}{2}(q^2)$, contradicting (\ref{eqn:MNorder}). 
\par Next suppose $N \in \mathscr{C}_4$, stabilizing a tensor product decomposition $V=U \otimes W$ with $\dim(U)=n_1, \dim(W)=n_2$ and $n=n_1n_2$. However, since $N \cap T$ is non-local we have $\soc(N \cap T) \cong \POmega(U)' \times \POmega(W)'$ by \cite[Lemma~4.4.9]{KlLi}, a contradiction. Therefore $N \notin \mathscr{C}_4$.
\par Now suppose $N \in \mathscr{C}_5$, so that $N$ is of the form $\mathrm{Cl}_n(q^\frac{1}{t})$ for some classical group and some prime $t$. Since $N \cap T$ is non-local, by \cite[Proposition~4.5.2]{KlLi} we have $\soc(N \cap T) \cong S$ for some finite simple classical group $S$ with natural module of dimension $n$, contradicting (\ref{eqn:MNorder}). Hence $N \notin \mathscr{C}_5$.
\par We next suppose $N \in \mathscr{C}_6$. However, \cite[Propositions~4.6.5--4.6.9]{KlLi} imply that $N \cap T$ is $r$-local for some prime $r$, a contradiction. 
\par If $N \in \mathscr{C}_7$ then $N$ stabilises a tensor product decomposition $V=U_1 \otimes \dots \otimes U_t$ where $\dim(U_i)=m$ for $1 \leqslant i \leqslant t$ and $n=m^t$. However, \cite[Lemma~4.7.1]{KlLi} implies that $\soc(N \cap T) \cong \POmega(U_1)' \times \dots \times \POmega(U_t)'$, a contradiction. Hence $N \notin \mathscr{C}_7$.
\par Finally consider the case $N \in \mathscr{C}_8$. Since $N$ is non-local, \cite[Lemma~4.8.1]{KlLi} implies that $\soc(N \cap T) \cong S$ for some finite simple classical group $S$ with natural module of dimension $n$, a contradiction. This proves $N \notin \mathscr{C}_8$.
\par We therefore have $N \in \mathscr{C}_9$, and so we have proved (\ref{eqn:MNinC9}). 

\par From the description of $\mathscr{C}_9$ given in \cite[\S~1.2]{KlLi}, $Y$ (and therefore $X$) acts irreducibly on $V$, and so $\hat{X}$ and $\hat{Y}$ are irreducible. By \cite[33.3]{As} we have $\hat{X}=\hat{X}' \circ Z(\hat{X})$ and $\hat{Y}=\hat{Y}' \circ Z(\hat{Y})$, and so $\hat{X}'$ and $\hat{Y}'$ act irreducibly on $V$. As $\hat{X}'$ and $\hat{Y}'$ are perfect central extensions of $\mathrm{A_7}$ and $\mathrm{A_6}$ respectively, $\hat{X}' \cong \mathrm{A_7}, 2.\mathrm{A_7}, 3.\mathrm{A_7}$ or $6.\mathrm{A_7}$, and similarly $\hat{Y}'\cong \mathrm{A_6}, 2.\mathrm{A_6}, 3.\mathrm{A_6}$ or $6.\mathrm{A_6}$. Using \cite{Atlas} and \cite{Brauer}, we find that the degrees of the irreducible representations of $\hat{X}'$ and $\hat{Y}'$  with $n \geqslant 10$ only coincide for $n=10$ or $15$. 
\par Using \cite[Tables~8.60--8.69]{BHRD} for $n=10$ and \cite[Tables~11.0.17--11.0.22]{Sc} for $n=15$, a similar proof to (\ref{eqn:YinS}) shows that $M \in \mathscr{C}_9$. We find that the only possibility is $n=10$ and $T=\POmega_{10}^\epsilon(q)$ (noting the exceptional isomorphisms listed above) with $M \leqslant R \max G, R \in \mathscr{C}_9$ and $\soc(R)=\PSp_4(q), \mathrm{M}_{22}, \mathrm{A}_{11}$ or $\mathrm{A}_{12}$. By \cite[Theorem~4.3.3]{BHRD}, $\hat{Y}'$ only has an absolutely irreducible representation of degree 10 preserving a quadratic form in characteristic $p=7$ (and in this case there is a unique representation), and so from \cite[Tables~8.67, 8.69]{BHRD} we have $\epsilon=-$ and $q=7$. This rules out the case $\soc(R)=\mathrm{A}_{12}$. We can also discard the case $\soc(R)=\mathrm{A}_{11}$, as the 10-dimensional module for $\mathrm{A}_{11}$ is the deleted permutation module, and the restriction to $\mathrm{A}_7$ is reducible. Therefore $\soc(R)=\mathrm{M}_{22}$ or $\PSp_4(7)$. In fact, in both of these cases we have $Y$ contained in a subgroup of $\soc(R)$ isomorphic to $\mathrm{A}_7$ by \cite[Propositions~4.9.60, 4.9.63, 6.2.13]{BHRD}, and so we list $T$ above.
\end{proof}

\begin{proof}[Proof of Theorem \ref{thm:faithful}]
\par As usual we follow the method outlined in Section \ref{sec:prelim} and classify subgroup lattices $L=(G,E,A,H)$ up to $\Aut(G)$-conjugation, where $G$ is an almost simple classical group, $E \max G$, $A<E$ with $|E:A|=2$ and $A<H<G$ with $T=\soc(G) \not\leqslant H$. Moreover, by Lemma \ref{lem:lufaithful}, $E$ and $H$ are almost simple with $\soc(E)=\mathrm{A_6}, \soc(H)=\mathrm{A_7}$, $|H:A|=7$ and $E \ne \mathrm{A_6}$ or $\mathrm{S_6}$. Note that in order for $A$ to be a proper subgroup of $H$ we must have $A \cong \mathrm{A}_6$ or $\mathrm{S}_6$. We then consider the coset graph $\Gamma$ obtained from each such lattice $L$. Clearly $G \leqslant \Aut(\Gamma)$, and so to determine if $G$ is the full automorphism group of $\Gamma$ it suffices to check \cite{LPS}. By construction and Lemma \ref{lem:lufaithful}, $\Gamma$ is edge-primitive and 3-arc-transitive with automorphism group $G$, and we list the lattice $L$ in Table \ref{tab:classicalgroupsfaithful}.

\par Let $G$ be an almost simple classical group with socle $T$, and suppose $G$ contains subgroups $E, A$ and $H$ as above. Observe that $E$ and $H$ satisfy the hypotheses of Lemma \ref{lem:a6a7maximalclassical} with $E=N$ and $H=M$, and therefore $T=\PSU_3(5), \PSL_4(2), \PSL_6^\epsilon(q)$ or $\POmega_{10}^-(7)$. Since $\PSL_4(2) \cong \mathrm{A}_8$ and Theorem \ref{thm:Anspor} shows that there are no edge-primitive 3-arc-transitive graphs whose automorphism group has socle $A_8$, we have already eliminated the case where $T=\PSL_4(2)$.

\par First suppose $T=\PSU_3(5)$. By \cite[Tables~8.5--8.6]{BHRD} we have $T \leqslant G \leqslant T.2$, where $T.2$ is an extension by a graph automorphism, and there are unique $\Aut(T)$-classes of subgroups $E$ and $H$ of $G$ such that $\soc(E)=\mathrm{A_6}, \soc(H)=\mathrm{A_7}$ and $E \max G$ (here $E$ and $H \in \mathscr{C}_9$ and $H \max G$). Using \textsc{Magma} we find that, for both $G=T$ and $G=T.2$, there exists a unique subgroup $A<E$ of index 2 such that $A<H_0<G$ where $H_0$ is $\Aut(T)$-conjugate to $H$. Therefore we have a single lattice, and this yields an edge-primitive 3-arc-transitive graph $\Gamma$ with automorphism group containing $G=T.2$. By \cite{LPS}, $G$ is indeed the full automorphism group, and so we have a lattice as listed in Table \ref{tab:classicalgroupsfaithful}. The associated graph is the Hoffman-Singleton graph.

\par Next suppose $T=\POmega_{10}^-(7)$. By \cite[Tables~8.68, 8.69]{BHRD} we have $T.2 \leqslant G \leqslant T.2^2$ where $T.2^2=T.\langle \gamma, \delta' \rangle = \PO_{10}^-(q)$ and $T.2 \ne T.\langle \delta' \rangle$. There are unique $\Aut(T)$-class of subgroups $E$ and $H$ of $G$ such that $\soc(E)=\mathrm{A_6}, \soc(H)=\mathrm{A}_7, E \max G$ and $\soc(E)<H$   (here $E \in \mathscr{C}_9$, and the result for $H$ follows from \cite[Theorem~4.3.3]{BHRD}). Since $\left( E \cap T\right).2\max T.2$ the graph is $T.2$-edge-primitive. We have $\left( E \cap T \right).2 \cong \PGL_2(9)$ or $\mathrm{M}_{10}$, and from the final paragraph of the proof of Lemma \ref{lem:a6a7maximalclassical},  we have $A \cap T  = \soc(E)$ contained in a subgroup $H_0 \cong \mathrm{A}_7$ that is $\Aut(T)$-conjugate to $H \cap T$. From \cite[Propositions~4.9.60, 6.2.13]{BHRD}, $H_0 < \mathrm{M}_{22} \cap \PSp_4(7)$ where $\mathrm{M}_{22} \max T$ and $\PSp_4(7) \max T$. We therefore have a single lattice, and this yields an edge-primitive 3-arc-transitive graph $\Gamma$ with $T.2 \leqslant \Aut(\Gamma)$. The lattice extends to $G=T.2^2$ with $E=\Aut(\mathrm{A}_6), A=\mathrm{S}_6 = \left( E \cap T \right).\langle \delta' \rangle$ and $H=\mathrm{S}_7 \leqslant \PSp_4(7).2 \max T.\langle \delta' \rangle$, and by \cite{LPS} this is the full automorphism group of $\Gamma$. 

\par Finally we consider the case where $T=\PSL_6^\epsilon(q)$. This case is more involved, as by \cite[Tables~8.24--8.27]{BHRD} we see that there are infinitely many values of $q$ such that $G$ contains almost simple subgroups $E$ and $H$ with $\soc(E)=\mathrm{A_6}, \soc(H)=\mathrm{A_7}$ and $E \max G$. First suppose $\epsilon=+$.  We shall work in the quasisimple group with $T=\SL_6(q)$. There are at most two $\Aut(G)$-classes of subgroups $E \max G$ with $\soc(E)=\mathrm{A_6}$, depending on if $q=p$ or $q=p^2$ and the value of $q \pmod{48}$. For each class, we consider if $E$ has a subgroup of index 2 contained in some almost simple group $H$ as above. 
\par We first note that, from \cite[Theorem~4.3.3]{BHRD}, absolutely irreducible representations of quasisimple groups of type $\mathrm{A}_7$ in dimension 3 only occur in characteristic 5. Therefore, if we have $E, A$ and $H$ as above (in the quasisimple group) and $H \leqslant (q-1, 6) \circ \SL_3(q) \in \mathscr{C}_9$ then by the conditions on the maximality of $E$ we have $q=25$ and $E \cap T = 6^\cdot \mathrm{A}_6$. However, a \textsc{Magma} calculation reveals that in $\SL_6(25)$ the subgroup $6\circ\SL_3(25)$ does not have a subgroup isomorphic to $6^\cdot \mathrm{A}_6$, and so this yields a contradiction. Therefore $H \not\leqslant (q-1, 6) \circ\SL_3(q)$.

\par First suppose $E \cap T=2 \times 3^\cdot \mathrm{A}_6.2_3$ is of novelty type N1 as in \cite[Table~8.25]{BHRD} so that $q=p \equiv 1 \pmod{24}$. Then $G=T.2$, an extension of $G$ by a graph automorphism, $E=2 \times 3^\cdot \mathrm{A}_6.2^2$ and $A=2 \times 3^\cdot \mathrm{A}_6.2_1=2 \times 3^\cdot \mathrm{S}_6$. Therefore $A \not\leqslant T$ and so $H \not\leqslant T$. Therefore the only possible maximal overgroups of $H$ are $6_1.\PSU_4(3).2_1$. However, from \cite{Atlas} we have $\mathrm{S}_7 \not\leqslant \PSU_4(3).2_1$, a contradiction. Therefore this case does not give rise to a lattice.

\par The proof where $E \cap T =2 \times 3^\cdot \mathrm{A}_6.2_3$ is a novelty of type N2 is very similar, and so we now consider the case where $E \cap T = 2 \times 3^\cdot \mathrm{A}_6$ is of type N3. In this case $q = p \equiv 7 \pmod{24}$, and either $G=T.\langle \delta^3, \gamma \rangle$ or $p \equiv \pm 2 \pmod{5}$ and $G=T.\langle \gamma \rangle$. First suppose $G=T.\langle \delta^3, \gamma \rangle$, so that $E = 2 \times 3^\cdot \Aut(\mathrm{A_6})$ and $A = 2 \times 3^\cdot \mathrm{S_6} = \left( E \cap T\right).\langle \delta^3 \gamma \rangle$. The only possibility is $A < H \leqslant 6_1 ^\cdot \PSU_4(3).2^2 \max G$, and indeed this chain exists (with $H \cong 2 \times 3 ^\cdot \mathrm{S}_7$) by \cite[Propositions~4.8.9, 4.8.12]{BHRD}. There is a unique $\Aut(T)$-class of absolutely irreducible representations of $3^\cdot \mathrm{A}_7$ by \cite[Theorem~4.3.3]{BHRD}, and so this yields a single lattice which we add to our table. This gives rise to an edge-primitive 3-arc-transitive graph $\Gamma$ with automorphism group $G$, and $\Gamma$ is the spread of an edge-primitive vertex-primitive graph with vertex-stabiliser $\PSU_4(3).2^2$ and the same automorphism group and edge-stabiliser. The case with $p \equiv \pm 2 \pmod{5}$ gives the same result.

\par The case where $E$ is novelty of type N4 is dealt with in a similar manner, and so we now consider the case where $E \cap T = 6^\cdot \mathrm{A}_6$ is of novelty type N1. Here $q=p \equiv 1, 31 \pmod{48}$ and $G=T.2$ is an extension of $T$ by a graph automorphism. We have $E=6^\cdot \mathrm{A}_6.2_2$ and $A=6^\cdot \mathrm{A}_6=E \cap T$. From \cite[Proposition~4.8.8]{BHRD}, $A<6^\cdot \mathrm{A}_7 \max T$, and this is the only possibility for $H$. We therefore have a lattice as desired, yielding a coset graph $\Gamma$. In this case $G$ is the full automorphism group of $\Gamma$ by \cite{LPS}. We therefore list the lattice in Table~\ref{tab:classicalgroupsfaithful}. 

\par The proofs for the remaining $\Aut(T)$-classes of subgroups $E$ of $G$ are very similar (using \cite[Propositions~4.8.8, 4.8.12]{BHRD}), and the results are listed in Table~\ref{tab:classicalgroupsfaithful}. 
\par The case $T= \PSU_6(q)$ is almost identical to the proof in the linear case.
\end{proof}

\bibliographystyle{amsplain}

\end{document}